
\documentclass[12pt,reqno]{amsart}
\usepackage{amsmath}
\usepackage{amssymb}
\usepackage{stmaryrd}
\usepackage[all]{xy}
\usepackage{a4wide}
\usepackage[utf8]{inputenc}

\setcounter{MaxMatrixCols}{10}

\makeatletter
\@namedef{subjclassname@2010}{  \textup{2010} Mathematics Subject Classification}
\makeatother
\newtheorem{theorem}{Theorem}[section]
\newtheorem{lem}[theorem]{Lemma}
\newtheorem{thm}[theorem]{Theorem}
\newtheorem{prop}[theorem]{Proposition}
\newtheorem{cor}[theorem]{Corollary}
\theoremstyle{definition}
\newtheorem{defn}[theorem]{Definition}
\newtheorem{ex}[theorem]{Example}
\newtheorem{exs}[theorem]{Examples}

\newtheorem{rem}[theorem]{Remark}
\newtheorem{rems}[theorem]{Remarks}

\newtheorem{punto}[theorem]{}
\newtheorem*{notation}{Notation}

\input{tcilatex}

\begin{document}
\title[Topological Lattices]{On Topological Lattices and an Application to
First Submodules}
\author{Jawad Abuhlail}
\thanks{Corresponding Author: Jawad Abuhlail}
\address{Department of Mathematics and Statistics\\
Box $\#$ 5046, KFUPM, Dhahran, KSA\\
Fax: +96638602340}
\email{abuhlail@kfupm.edu.sa}
\urladdr{http://faculty.kfupm.edu.sa/math/abuhlail/}
\author{Christian Lomp}
\curraddr{Centro de Matematica\\
Universidade do Porto\\
Rua Campo Alegre 687\\
4169-007 Porto, Portugal}
\email{clomp@fc.up.pt}
\urladdr{http://www.fc.up.pt/pessoas/clomp/}
\date{\today }
\subjclass[2010]{Primary 06A15; Secondary 16D10, 13C05, 13C13, 54B99}
\keywords{Topological lattices; Prime Modules, First Submodules, Strongly
Hollow Submodules, Zariski Topology, Dual Zariski Topology}
\thanks{The first author would like to acknowledge the support provided by
the Deanship of Scientific Research (DSR) at King Fahd University of
Petroleum $\&$ Minerals (KFUPM) for funding this work through project
SB101023. The second author is also grateful to the funding by the European
Regional Development Fund through the program COMPETE and by the Portuguese
Government through the FCT - Funda\c{c}\~{a}o para a ci\^{e}ncia e a
Tecnologia under the pro ject PEst-C/MAT/UI0144/2011. }

\begin{abstract}
We introduce the notion of a (\emph{strongly}) \emph{topological lattice} $%
\mathcal{L}=(L,\wedge ,\vee )$ with respect to a subset $X\subsetneqq L;$ a
prototype is the lattice of (two-sided) ideals of a ring $R,$ which is
(strongly) topological with respect to the prime spectrum of $R.$ We
investigate and characterize (strongly) topological lattices. Given a
non-zero left $R$-module $M,$ we introduce and investigate the spectrum $%
\mathrm{Spec}^{\mathrm{f}}(M)$ of \textit{first submodules} of $M.$ We
topologize $\mathrm{Spec}^{\mathrm{f}}(M)$ and investigate the algebraic
properties of $_{R}M$ by passing to the topological properties of the
associated space.
\end{abstract}

\maketitle

\section{Introduction}

Yassemi \cite{Yas2001} introduced the notion of \textit{second }(\textit{sub}%
)\textit{modules }of a given non-zero module over a commutative ring. This
notion was studied for modules over arbitrary associative rings by Annin
\cite{Ann2002}, where a \textit{second module} was called a \textit{coprime
module}. Moreover, the notion of \textit{coprime submodules} was
investigated by Kazemifard \emph{et al}. \cite{KNR}. In this paper, we
dualize the notion of a coprime submodule to present the spectrum $\mathrm{%
Spec}^{\mathrm{f}}(M)$ of \emph{first submodules} of a given non-zero left
module $M$ over an arbitrary associative, not necessarily commutative, ring $%
R$ with unity. We topologize this spectrum to obtain a dual Zariski-like
topology, study properties of the resulting topological space and
investigate the interplay between the properties of that space and the
algebraic properties of $M$ as an $R$-module.

To achieve this goal, we begin in the second section with a more general
framework of a \emph{topological complete lattice} $\mathcal{L}=(L,\wedge
,\vee ,0,1)$ with respect to a proper subset $X\subsetneqq L$. We
investigate such lattices and characterize them; moreover, we investigate
the irreducibility of the closed subsets of $X.$ In Section 3, we apply the
results we obtained in Section 2 to the concrete example $\mathcal{L}(M),$
the complete lattice of $R$-submodules of a given non-zero $R$-module $M,$
and $X=\mathrm{Spec}^{\mathrm{f}}(M),$ the spectrum of $R$-submodules of $M$
which are prime as $R$-modules. In Section 4, we obtain several algebraic
properties of $_{R}M$ by passing to the topological properties of $\mathrm{%
Spec}^{\mathrm{f}}(M)$.

\section{Topological Lattices}

Throughout, $\mathcal{L}=(L,\wedge ,\vee ,0,1)$ is a complete lattice, $%
X\subseteq L\setminus \{1\}$ is a non-empty subset and $\mathcal{P}=(%
\mathcal{P}(X),\cap ,\cup ,\emptyset ,X)$ is the complete lattice on the
power set of $X.$ We define an order-reversing map%
\begin{equation*}
V:L\longrightarrow \mathcal{P}(X),\qquad a\mapsto V(a)=\{p\in X\mid a\leq
p\}.
\end{equation*}%
It is clear that $V(0)=X$, $V(1)=\emptyset $ and $V\left( \bigvee \mathcal{A}%
\right) =\bigcap_{a\in A}V(a_{i})$ for every $\mathcal{A}\subseteq L$. This
means that the image of $V$ contains $X,$ $\emptyset $ and is closed under
arbitrary intersections. If $\mathrm{Im}(V)$ is also closed under finite
unions, then the elements of $V(L)$ can be considered the closed sets of a
topology on $X$.

\begin{defn}
We say that $\mathcal{L}$ is a \emph{topological}$^{X}$\emph{-lattice} (or $%
X $\emph{-top}, for short) iff $V(L)$ is closed under finite unions.
\end{defn}

The purpose of this section is to characterize $X$-\emph{top lattices}.
Notice that the map $V$ represents the lower adjoint map of a Galois
connection between $\mathcal{L}$ and $\mathcal{P}$, where the upper adjoint
map is%
\begin{equation*}
I:\mathcal{P}(X)\longrightarrow L,\text{ }\mathcal{A}\mapsto \bigwedge
\mathcal{A}.
\end{equation*}%
Since $V,$ $I$ are order reversing and $a\leq I(V(a)),$ $\mathcal{A}%
\subseteq V(I(\mathcal{A}))$ hold for all $a\in L,$ $\mathcal{A}\in \mathcal{%
P}(X)$, we conclude that $(V,I)$ is a Galois connection \cite[3.13]{Gra2011}
and that%
\begin{equation}
V=V\circ I\circ V\qquad \text{and}\qquad I=I\circ V\circ I.  \label{eq_VIV}
\end{equation}%
The compositions $I\circ V$ and $V\circ I$ are \emph{closure operators} \cite%
[Lemma 32]{Gra2011} and the closed elements with respect to this Galois
connection are%
\begin{equation*}
\mathcal{C}(L)=\{a\in L\mid a=I(V(a))\}=\{I(\mathcal{A})\mid \mathcal{A}%
\subseteq X\}=\mathrm{Im}(I)
\end{equation*}%
and%
\begin{equation*}
\mathcal{C}(\mathcal{P}(X))=\{\mathcal{A}\in \mathcal{P}(X)\mid \mathcal{A}%
=V(I(\mathcal{A}))\}=\{V(a)\mid a\in L\}=\mathrm{Im}(V).
\end{equation*}%
Clearly, $V$ is a bijection between $\mathcal{C}(L)$ and $\mathcal{C}(%
\mathcal{P}(X))$ with inverse $I$.

\subsection*{A lattice structure on $\mathcal{C}(L)$}

Note that $X\subseteq \mathcal{C}(L)$, because for every element $p\in X$ we
have $I(V(p))=\bigwedge ([p,1[\cap X)=p$. Moreover, $(\mathcal{C}(L),\wedge
,\bigwedge X)$ is a complete lower semilattice because if $Y\subseteq
\mathcal{C}(L),$ then for each $y\in Y$ we have $y=I(\mathcal{A}_{y})$ for
some subset $A_{y}\subseteq X$ and it follows that%
\begin{equation*}
\bigwedge Y=\bigwedge_{y\in Y}\bigwedge \mathcal{A}_{y}=\bigwedge
\bigcup_{y\in \mathcal{A}_{y}}\mathcal{A}_{y}=I(\bigcup_{y\in Y}\mathcal{A}%
_{y})\in \mathcal{C}(L).
\end{equation*}%
This makes $\mathcal{C}(L)$ a complete lattice by defining a \emph{new} join
for each subset $Y\subseteq \mathcal{C}(L)$ as%
\begin{equation*}
\widetilde{\bigvee }Y:=IV(\bigvee Y)=\bigwedge \{c\in \mathcal{C}(L)\mid
y\leq c\text{ }\forall \text{ }y\in Y\}.
\end{equation*}%
Notice that this new join $\widetilde{\vee }$ is usually \emph{different}
from the original join $\vee $ of $L$.

\bigskip

Before we characterize $X$-top lattices, we need to recall the following
definition (see for example \cite[Definition 1.1.]{AL2013}). An element $p$
in a lower semilattice $(L,\wedge )$ is called \textit{irreducible} iff for
all $a,b\in L$ with $p\leq a,b$:
\begin{equation}
a\wedge b\leq p\qquad \Rightarrow \qquad a\leq p\text{ or }b\leq p.
\label{equation_1}
\end{equation}%
The element $p$ is called \textit{strongly irreducible} iff Equation (\ref%
{equation_1}) holds for all $a,b\in L$.

\begin{thm}
\label{characterisation_xtop}The following statements are equivalent:

\begin{enumerate}
\item[(a)] $\mathcal{L}$ is an $X$-top lattice;

\item[(b)] $V:(\mathcal{C}(L),\wedge ,\widetilde{\vee })\rightarrow (%
\mathcal{P}(X),\cap ,\cup )$ is an anti-homomorphism of lattices;

\item[(c)] every element $p\in X$ is strongly irreducible in $(\mathcal{C}%
(L),\wedge )$;

\item[(d)] $(\mathcal{C}(L),\wedge ,\widetilde{\vee })$ is a distributive
lattice and every element $p\in X$ is irreducible in $(\mathcal{C}(L),\wedge
).$
\end{enumerate}
\end{thm}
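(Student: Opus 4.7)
My plan is to prove (a) $\Leftrightarrow$ (b) $\Leftrightarrow$ (c) as one block and then (c) $\Leftrightarrow$ (d) as a second block. Throughout I would lean on the Galois connection $(V,I)$ and the identities (\ref{eq_VIV}), which already force $V$ to convert arbitrary joins into intersections. The only nontrivial content therefore concerns finite meets, and this is what all four conditions are really measuring, each from a different angle.

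For (a) $\Leftrightarrow$ (b), I would first observe that $V$ restricted to $\mathcal{C}(L)$ takes $\widetilde{\vee}$ to $\cap$ automatically, since $V(a\widetilde{\vee} b) = V(IV(a\vee b)) = V(a\vee b) = V(a)\cap V(b)$ by (\ref{eq_VIV}). Hence (b) reduces to the single identity $V(a\wedge b) = V(a)\cup V(b)$ for $a,b\in \mathcal{C}(L)$. Given (a), I would write $V(a)\cup V(b) = V(c)$ for some $c\in L$ and, applying $I$ together with $a = I(V(a))$, $b = I(V(b))$, deduce $c\leq a\wedge b$, whence $V(a\wedge b)\subseteq V(c) = V(a)\cup V(b)$, the reverse inclusion being automatic from order-reversal. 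Conversely, (b) exhibits $V(a)\cup V(b) = V(IV(a)\wedge IV(b))\in V(L)$ for arbitrary $a,b\in L$, giving (a). The equivalence (b) $\Leftrightarrow$ (c) is then just the pointwise translation of the set equality $V(a\wedge b) = V(a)\cup V(b)$: membership on the left says $a\wedge b\leq p$, and on the right says $a\leq p$ or $b\leq p$, which is precisely strong irreducibility of $p$ in $(\mathcal{C}(L),\wedge)$.

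For (c) $\Rightarrow$ (d), strong irreducibility plainly implies irreducibility, while distributivity of $(\mathcal{C}(L),\wedge,\widetilde{\vee})$ is inherited from $\mathcal{P}(X)$ through the lattice anti-isomorphism $V\colon \mathcal{C}(L)\to V(L)$ supplied by (b). The main obstacle will be (d) $\Rightarrow$ (c): upgrading mere irreducibility to strong irreducibility using only distributivity. My plan here is the standard trick of lifting into the principal filter above $p$. Given $a,b\in \mathcal{C}(L)$ with $a\wedge b\leq p$, I would form $a\widetilde{\vee} p$ and $b\widetilde{\vee} p$; both lie above $p$, and distributivity in $(\mathcal{C}(L),\wedge,\widetilde{\vee})$ together with $a\wedge b\leq p$ yields
\begin{equation*}
(a\widetilde{\vee} p)\wedge (b\widetilde{\vee} p) \;=\; (a\wedge b)\widetilde{\vee} p \;=\; p.
\end{equation*}
Irreducibility of $p$ applied to these two elements above $p$ then forces $a\widetilde{\vee} p = p$ or $b\widetilde{\vee} p = p$, i.e., $a\leq p$ or $b\leq p$, establishing strong irreducibility and closing the cycle.
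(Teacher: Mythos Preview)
Your proof is correct and follows essentially the same strategy as the paper: exploit the Galois connection to reduce everything to the identity $V(a\wedge b)=V(a)\cup V(b)$ on $\mathcal{C}(L)$, translate this pointwise into strong irreducibility, and pass distributivity through the anti-isomorphism onto $V(L)\subseteq \mathcal{P}(X)$. The one substantive difference is that for $(d)\Rightarrow(c)$ the paper simply cites \cite[Lemma 1.20]{AL2013}, whereas you supply the standard self-contained argument (form $a\widetilde{\vee}p$, $b\widetilde{\vee}p$, use distributivity to get $(a\widetilde{\vee}p)\wedge(b\widetilde{\vee}p)=(a\wedge b)\widetilde{\vee}p=p$, then apply irreducibility); this is exactly the content of the cited lemma, so your version is slightly more explicit but not a different idea.
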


\begin{proof}
$(a)\Rightarrow (b)$ Suppose that $\mathcal{L}$ is $X$-top, \emph{i.e.} $%
V(L) $ is closed under finite unions. Let $a,b\in \mathcal{C}(L)$. By
assumption, $V(a)\cup V(b)=V(c)$ for some $c\in L$. Hence%
\begin{equation*}
a\wedge b=I(V(a))\wedge I(V(b))=I(V(a)\cup V(b))=I(V(c))
\end{equation*}%
and it follows that $V(a\wedge b)=V(I(V(c)))\overset{\text{(\ref{eq_VIV})}}{=%
}V(c)=V(a)\cup V(b)$. Moreover, it is clear that $V(a$ $\widetilde{\vee }$ $%
b)=V(a)\cap V(b)$ for all $a,b\in \mathcal{C}(L).$

$(b)\Rightarrow (c)$ Let $p\in X$ and $a,b\in \mathcal{C}(L)$. If $a\wedge
b\leq p,$ then $V(p)\subseteq V(a\wedge b)=V(a)\cup V(b)$ whence $p\in V(a)$
or $p\in V(b)$, \emph{i.e.} $a\leq p$ or $b\leq p.$

$(c)\Rightarrow (a)$ Let $V(a)$ and $V(b)$ be two closed sets. By Equation (%
\ref{eq_VIV}), we can write them as $V(a)=V(a^{\prime })$ and $%
V(b)=V(b^{\prime })$ for some $a^{\prime },b^{\prime }\in \mathcal{C}(L)$.
Let $p\in V(a^{\prime }\wedge b^{\prime })$, whence $a^{\prime }\wedge
b^{\prime }\leq p.$ Since $p$ is strongly irreducible in $\mathcal{C}(L)$, $%
a^{\prime }\leq p$ or $b^{\prime }\leq p$, \emph{i.e.} $p\in V(a^{\prime })$
or $p\in V(b^{\prime })$. Thus $V(a^{\prime }\wedge b^{\prime })\subseteq
V(a)\cup V(b)$. Since $V(a)\cup V(b)=V(a^{\prime })\cup V(b^{\prime
})\subseteq V(a^{\prime }\wedge b^{\prime })$ always holds, the equality
follows.

$(d)\Rightarrow (c)$ holds by \cite[Lemma 1.20]{AL2013}.

$(b+c)\Rightarrow (d)$ Note that $V:\mathcal{C}(L)\rightarrow \mathcal{P}(X)$
is injective and, by $(b),$ the dual lattice $\mathcal{C}(L)^{\circ }$ is
isomorphic to a sublattice of the distributive lattice $\mathcal{P},$ whence
$(\mathcal{C}(L),\wedge ,\widetilde{\vee })$ is distributive as well. On the
other hand, every strongly irreducible element is in particular irreducible.
\end{proof}

\begin{ex}
\label{example_zariski_prime}Let $R$ be an associative, not necessarily
commutative, ring with unity, $X=\mathrm{Spec}(R)$ be the spectrum of prime
ideals of $R$ and $\mathcal{L}_{2}(R)$ the lattice of ideals of $R.$ Notice
that $\mathrm{Im}(I)$ consists of all ideals that are intersections of prime
ideals, \emph{i.e.} the \emph{semiprime ideals} of $R$ \cite[2.5]{Wis1991}.
It is clear that every prime ideal $P$ is strongly irreducible in $\mathcal{L%
}_{2}(R)$; in particular, $P$ is strongly irreducible in $\mathrm{Im}(I)$
whence $\mathcal{L}_{2}\left( {R}\right) $ is a $\mathrm{Spec}(R)$-top
lattice. The topology on $\mathrm{Spec}(R)$ is the ordinary Zariski topology.
\end{ex}

\begin{defn}
We say that $\mathcal{L}$ is a \textit{strongly }$X$\textit{-top} lattice
(or \textit{strongly }$X$\textit{-top} for short) iff every element of $X$
is strongly irreducible in $(L,\wedge )$.
\end{defn}

The proof of the following result is similar to that of Theorem \ref%
{characterisation_xtop}: If all elements $p\in X$ are strongly irreducible
in $(L,\wedge )$, then it follows by Theorem \ref{characterisation_xtop}
that $\mathcal{L}$ is an $X$-top lattice. Moreover, for all $a,b\in L$ we
have%
\begin{equation*}
p\in V(a\wedge b)\text{ }\Rightarrow \text{ }[a\wedge b\leq p\Rightarrow
a\leq p\text{ or }b\leq p]\text{ }\Rightarrow \text{ }p\in V(a)\cup V(b),
\end{equation*}%
\emph{i.e.} $V(a\wedge b)\subseteq V(a)\cup V(b).$ The reverse inclusion is
obvious; this means that $V(a\wedge b)=V(a)\cup V(b)$ for all $a,b\in L.$ On
the other hand, it is clear that $V(a\vee b)=V(a)\cap V(b)$ for all $a,b\in
L.$

\begin{prop}
The following statements are equivalent:

\begin{enumerate}
\item[(a)] $\mathcal{L}$ is a strongly $X$-top lattice;

\item[(b)] $V:\mathcal{L}\rightarrow \mathcal{P}$ is an anti-homomorphism of
lattices.
\end{enumerate}
\end{prop}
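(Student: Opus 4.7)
The plan is to exploit the symmetric structure used for the analogous Theorem~\ref{characterisation_xtop}, but now applied to the full lattice $\mathcal{L}$ rather than its sublattice of closed elements $\mathcal{C}(L)$. Since being an anti-homomorphism of lattices amounts to the two identities $V(a\wedge b)=V(a)\cup V(b)$ and $V(a\vee b)=V(a)\cap V(b)$, and the second identity follows for free from the fact that $V$ reverses order and $\bigvee$ goes to $\bigcap$ under $V$, the entire content of the equivalence sits in the first identity.

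For $(a)\Rightarrow(b)$ I would just record the argument already indicated in the paragraph preceding the proposition: the inclusion $V(a)\cup V(b)\subseteq V(a\wedge b)$ is immediate from $a\wedge b\leq a,b$ and the order-reversing nature of $V$, while the reverse inclusion $V(a\wedge b)\subseteq V(a)\cup V(b)$ is precisely the translation of strong irreducibility of each $p\in X$ in $(L,\wedge)$: if $a\wedge b\leq p$ then $a\leq p$ or $b\leq p$, i.e.\ $p\in V(a)\cup V(b)$. Combined with $V(a\vee b)=V(a)\cap V(b)$, which follows from $V(\bigvee\mathcal{A})=\bigcap_{a\in\mathcal{A}}V(a)$ noted at the beginning of the section, this yields that $V$ is an anti-homomorphism of lattices.

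For $(b)\Rightarrow(a)$ I would run the analogue of the $(b)\Rightarrow(c)$ step of Theorem~\ref{characterisation_xtop}, only now no passage to closed elements is needed since $V$ is assumed to be an anti-homomorphism on all of $\mathcal{L}$. Fix $p\in X$ and $a,b\in L$ with $a\wedge b\leq p$. Then $p\in V(a\wedge b)=V(a)\cup V(b)$, so either $a\leq p$ or $b\leq p$; since $a,b$ were arbitrary in $L$, $p$ is strongly irreducible in $(L,\wedge)$, as required.

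There is no real obstacle here; the proof is essentially a bookkeeping exercise, and the observations immediately preceding the proposition already do most of the work. The only thing to be careful about is to verify the join identity $V(a\vee b)=V(a)\cap V(b)$ separately (it is a general consequence of $V$ being order-reversing with $V\!\left(\bigvee\mathcal{A}\right)=\bigcap_{a\in\mathcal{A}}V(a)$), so that the statement ``$V$ is an anti-homomorphism of lattices'' is fully justified and not just the meet half of it.
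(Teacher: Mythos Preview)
Your proposal is correct and follows essentially the same route as the paper: the paper's proof (given in the paragraph immediately preceding the proposition) argues $(a)\Rightarrow(b)$ by using strong irreducibility of each $p\in X$ to get $V(a\wedge b)\subseteq V(a)\cup V(b)$, notes the reverse inclusion is obvious, and records $V(a\vee b)=V(a)\cap V(b)$ as clear; the converse is declared ``similar to Theorem~\ref{characterisation_xtop}'', which is exactly your $(b)\Rightarrow(a)$ argument.
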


\begin{ex}
\label{example_not_strongly_top}Let $R$ be an arbitrary associative ring
with unity and $X=\mathrm{Spec}(R).$ As mentioned in Example \ref%
{example_zariski_prime}, every prime ideal is strongly irreducible in $%
\mathcal{L}_{2}(R)$. In particular, if $R$ is commutative (or more generally
\emph{left duo}), then the lattice $\mathcal{L}\left( {_{R}R}\right) $ of
left ideals of $R$ is strongly $X$-top. However, if $\mathcal{L}_{2}(R)\neq
\mathcal{L}\left( {_{R}R}\right) $, then $\mathcal{L}\left( {_{R}R}\right) $
might not be strongly $X$-top. For example, if $R$ is a prime ring which is
not uniform as a left $R$-module, then $\mathcal{L}\left( {_{R}R}\right) $
is not strongly $X$-top because $P=0$ is a prime ideal and there are
non-zero left ideals $A$, $B$ of $R$ with $A\cap B=0$. An example of such a
ring is given by the full $n\times n$-matrix ring $R=\mathrm{M}_{n}(K)$ over
a field $K$ where $n\geq 2$.
\end{ex}

Recall from \cite{Bou1966} that for a non-empty topological space $\mathbf{X}
$, a non-empty subset ${\mathcal{A}}\subseteq {\mathbf{X}}$ is said to be
\textit{irreducible }in $\mathbf{X}$ iff for all proper closed subsets $%
A_{1},A_{2}$ of $\mathbf{X}$ we have%
\begin{equation*}
{\mathcal{A}}\subseteq A_{1}\cup A_{2}\Rightarrow {\mathcal{A}}\subseteq
A_{1}\text{ or }{\mathcal{A}}\subseteq A_{2}.
\end{equation*}%
A maximal irreducible subset of $\mathbf{X}$ is called an \emph{irreducible
component} and is necessarily closed.

\begin{prop}
\label{irreducible_subsets}Let $\emptyset \neq \mathcal{A}\subseteq X$.

\begin{enumerate}
\item[(1)] Let $\mathcal{L}$ be $X$-top. If $I(\mathcal{A})$ is irreducible
in $(\mathcal{C}(L),\wedge )$, then $\mathcal{A}$ is an irreducible subset
of $X$.

\item[(2)] Let $\mathcal{L}$ be strongly $X$-top. The following are
equivalent:

\begin{enumerate}
\item $I(\mathcal{A})$ is irreducible in $(\mathcal{C}(L),\wedge );$

\item $\mathcal{A}$ is an irreducible subset of $X;$

\item $I(\mathcal{A})$ is \emph{(}strongly\emph{)} irreducible in $(L,\wedge
).$
\end{enumerate}
\end{enumerate}
\end{prop}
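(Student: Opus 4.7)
The plan is to prove (1) directly and then obtain (2) by traversing the cycle (a)$\Rightarrow$(b)$\Rightarrow$``(c) strong''$\Rightarrow$``(c) irreducible''$\Rightarrow$(a).

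For (1), suppose $\mathcal{A}\subseteq V(a_1)\cup V(a_2)$ with both $V(a_i)$ proper in $X$. Replacing each $a_i$ by $I(V(a_i))$ leaves $V(a_i)$ unchanged by (\ref{eq_VIV}) and places $a_i$ in $\mathcal{C}(L)$. The $X$-top hypothesis together with Theorem \ref{characterisation_xtop}(b) rewrites the union as $V(a_1\wedge a_2)$, and unravelling the inclusion yields the lattice inequality $a_1\wedge a_2\leq I(\mathcal{A})$. The main obstacle is that the irreducibility of $I(\mathcal{A})$ in $(\mathcal{C}(L),\wedge)$ only applies to test elements lying \emph{above} $I(\mathcal{A})$, whereas $a_1,a_2$ need not. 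I would resolve this by lifting each $a_i$ to $c_i:=a_i\,\widetilde{\vee}\,I(\mathcal{A})\in\mathcal{C}(L)$, so that $I(\mathcal{A})\leq c_i$ automatically; then the distributivity of $(\mathcal{C}(L),\wedge,\widetilde{\vee})$ granted by Theorem \ref{characterisation_xtop}(d) gives $c_1\wedge c_2=I(\mathcal{A})\,\widetilde{\vee}\,(a_1\wedge a_2)=I(\mathcal{A})$. Applying the hypothesis produces some $c_i\leq I(\mathcal{A})$, whence $a_i\leq I(\mathcal{A})\leq p$ for every $p\in\mathcal{A}$, i.e.\ $\mathcal{A}\subseteq V(a_i)$.

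For (2), (a)$\Rightarrow$(b) is immediate from (1), since strongly $X$-top implies $X$-top as recorded in the excerpt. To prove (b) implies the strong version of (c), take any $a,b\in L$ with $a\wedge b\leq I(\mathcal{A})$; then $a\wedge b\leq p$ for every $p\in\mathcal{A}$, and strong irreducibility of $p$ in $L$ (provided by the hypothesis) forces $a\leq p$ or $b\leq p$, so $\mathcal{A}\subseteq V(a)\cup V(b)$. If $V(a)=X$ or $V(b)=X$ the corresponding element is already $\leq I(\mathcal{A})$; otherwise both are proper closed subsets and irreducibility of $\mathcal{A}$ in $X$ gives $\mathcal{A}\subseteq V(a)$ or $\mathcal{A}\subseteq V(b)$, so $a\leq I(\mathcal{A})$ or $b\leq I(\mathcal{A})$. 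Strong irreducibility trivially implies irreducibility in $L$, and since the meet on $\mathcal{C}(L)$ is inherited from $L$, irreducibility of $I(\mathcal{A})$ in $L$ restricts immediately to irreducibility in $\mathcal{C}(L)$, closing the cycle to (a).
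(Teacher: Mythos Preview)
Your argument is correct throughout, and for part~(2) it coincides with the paper's proof almost verbatim (the paper also runs the cycle $(a)\Rightarrow(b)\Rightarrow(c)\Rightarrow(a)$, invoking the strongly $X$-top hypothesis to rewrite $V(a_1\wedge a_2)=V(a_1)\cup V(a_2)$ in the step $(b)\Rightarrow(c)$). For part~(1), however, the paper takes a shorter and more direct path that avoids your detour through distributivity. Rather than replacing $a_i$ by $I(V(a_i))$ and then lifting to $c_i=a_i\,\widetilde{\vee}\,I(\mathcal{A})$, the paper simply splits $\mathcal{A}$ itself: setting $\mathcal{A}_i:=\mathcal{A}\cap V(a_i)$ gives $\mathcal{A}=\mathcal{A}_1\cup\mathcal{A}_2$, and then $I(\mathcal{A})=I(\mathcal{A}_1\cup\mathcal{A}_2)=I(\mathcal{A}_1)\wedge I(\mathcal{A}_2)$ with $I(\mathcal{A})\leq I(\mathcal{A}_i)$ automatic from $\mathcal{A}_i\subseteq\mathcal{A}$. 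Thus the test elements $I(\mathcal{A}_i)\in\mathcal{C}(L)$ already sit above $I(\mathcal{A})$, and the irreducibility hypothesis applies immediately to give $I(\mathcal{A})=I(\mathcal{A}_i)$ for some $i$, whence $\mathcal{A}\subseteq V(I(\mathcal{A}))=V(I(\mathcal{A}_i))\subseteq V(a_i)$. The advantage of the paper's route is that it uses only the Galois-connection identity $I(\mathcal{B}_1\cup\mathcal{B}_2)=I(\mathcal{B}_1)\wedge I(\mathcal{B}_2)$ and never appeals to Theorem~\ref{characterisation_xtop}(b) or~(d); your approach, while valid, leans on the distributivity of $(\mathcal{C}(L),\wedge,\widetilde{\vee})$, which is logically downstream of the very $X$-top property being exploited.
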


\begin{proof}
(1) By our assumption, $X$ becomes a topological space. Suppose that $%
\mathcal{A}\subseteq V(a_{1})\cup V(a_{2})$ for some $a_{1},a_{2}\in L$. Set
$\mathcal{A}_{i}=V(a_{i})\cap \mathcal{A}$ for $i=1,2$, so that $\mathcal{A}=%
\mathcal{A}_{1}\cup \mathcal{A}_{2}.$ Notice that $I(\mathcal{A})=I(\mathcal{%
A}_{1})\wedge I(\mathcal{A}_{2}),$ whence $I(\mathcal{A})=I(\mathcal{A}_{i})$
for some $i=1,2$ as $I(\mathcal{A})$ is assumed to be irreducible in $%
\mathcal{C}(L),$ and it follows that%
\begin{equation*}
\mathcal{A}\subseteq V(I(\mathcal{A}))=V(I(\mathcal{A}_{i}))\subseteq
V(I(V(a_{i}))=V(a_{i}).
\end{equation*}

(2) Suppose that all elements of $X$ are strongly irreducible in $(L,\wedge
).$

$(a)\Rightarrow (b)$ follows by (1).

$(b)\Rightarrow (c)$ Let $\mathcal{A}$ be an irreducible subset of $X$ and
assume that $a_{1}\wedge a_{2}\leq I(\mathcal{A})$ for some $a_{1},a_{2}\in
L $. It follows that
\begin{equation*}
\mathcal{A}\subseteq V(I(\mathcal{A}))\subseteq V(a_{1}\wedge
a_{2})=V(a_{1})\cup V(a_{2}).
\end{equation*}%
As $\mathcal{A}$ is irreducible, $\mathcal{A}\subseteq V(a_{i})$ for some $%
i=1,2,$ whence $I(\mathcal{A})\geq I(V(a_{i}))\geq a_{i}$ showing that $I(%
\mathcal{A})$ is strongly irreducible in $(L,\wedge )$.

$(c)\Rightarrow (a)$ is obvious.
\end{proof}

\begin{ex}
Let $R$ be a simple ring and $X=\mathrm{Spec}(R)=\{0\}.$ Clearly, $\mathcal{L%
}\left( {_{R}R}\right) $ is an $X$-top lattice. Notice that $X$ is
irreducible since it is a singleton. However, $I(X)=0$ is irreducible in $(%
\mathcal{L}\left( {_{R}R}\right) ,\cap )$ if and only if $R$ is uniform as
left $R$-module if and only if $\mathcal{L}\left( {_{R}R}\right) $ is
strongly $X$-top. Thus, every simple ring that is not left uniform can be
taken as an example to show that the hypothesis on $\mathcal{L}$ to be
strongly $X$-top in \ref{irreducible_subsets} (2) cannot be dropped.
\end{ex}

\begin{cor}
\label{x->irred}If $\mathcal{L}$ is $X$-top and $\mathcal{A}\subseteq X$ is
such that $I(\mathcal{A})\in X$, then $\mathcal{A}$ is irreducible.
\end{cor}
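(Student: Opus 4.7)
The plan is to derive the corollary by chaining Theorem \ref{characterisation_xtop} with part (1) of Proposition \ref{irreducible_subsets}. The crucial observation, already noted in the paper, is that $X \subseteq \mathcal{C}(L)$, because for every $p \in X$ one has $I(V(p)) = \bigwedge([p,1[\,\cap X) = p$. Hence the hypothesis $I(\mathcal{A}) \in X$ gives in particular $I(\mathcal{A}) \in \mathcal{C}(L)$, so it makes sense to speak of irreducibility of $I(\mathcal{A})$ in $(\mathcal{C}(L),\wedge)$.

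First, I would invoke the equivalence (a)$\Leftrightarrow$(c) of Theorem \ref{characterisation_xtop}: since $\mathcal{L}$ is $X$-top, every element of $X$ is strongly irreducible in $(\mathcal{C}(L),\wedge)$. Applying this to the element $I(\mathcal{A}) \in X$ yields that $I(\mathcal{A})$ is strongly irreducible, and in particular irreducible, in $(\mathcal{C}(L),\wedge)$.

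Second, I would apply Proposition \ref{irreducible_subsets}(1): under the $X$-top hypothesis, irreducibility of $I(\mathcal{A})$ in $(\mathcal{C}(L),\wedge)$ implies that $\mathcal{A}$ is an irreducible subset of the topological space $X$. This gives the conclusion.

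There is no genuine obstacle here; the statement is essentially a bookkeeping corollary that packages the two preceding results. The only point requiring a moment of care is remembering that $X$ sits inside $\mathcal{C}(L)$, so that the strong irreducibility furnished by Theorem \ref{characterisation_xtop}(c) applies verbatim to $I(\mathcal{A})$ without any further closure argument.
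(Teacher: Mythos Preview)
Your argument is correct and is precisely the intended one: the paper states this corollary without proof, and the implicit reasoning is exactly the chain you describe---Theorem \ref{characterisation_xtop}(c) gives strong irreducibility of $I(\mathcal{A})\in X$ in $(\mathcal{C}(L),\wedge)$, and Proposition \ref{irreducible_subsets}(1) then yields irreducibility of $\mathcal{A}$.
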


The following result will be needed when dealing with \emph{first submodules}%
.

\begin{cor}
\label{uniserial_xtop}Let $\mathcal{L}$ be $X$-top. If $[x,1[\subseteq X$
for some $x\in X$, then $[x,1[$ is a chain. Moreover, if $[x,1[\subseteq X$
for every $x\in X$, then every non-empty subset $\mathcal{A}\subseteq X$
with $I(\mathcal{A})\in X$ is a chain.
\end{cor}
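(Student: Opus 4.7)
The plan is to reduce both claims to the strong irreducibility of the elements of $X$ inside $(\mathcal{C}(L),\wedge)$, which is available via Theorem~\ref{characterisation_xtop}(c), and to use the inclusion $X\subseteq \mathcal{C}(L)$ noted in the text. The key trick is a ``diagonal'' application of strong irreducibility: if $p\in X$ equals $a\wedge b$ for some $a,b\in\mathcal{C}(L)$, then the trivial inequality $a\wedge b\le p$ forces $a\le p$ or $b\le p$, i.e.\ $a\le b$ or $b\le a$.

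For the first assertion, I would pick any $a,b\in[x,1[$ and observe that $x\le a\wedge b$ while $a\wedge b\le a<1$, so $a\wedge b\in[x,1[\subseteq X\subseteq \mathcal{C}(L)$. Since $a,b\in X\subseteq \mathcal{C}(L)$ as well, and meets computed in $\mathcal{C}(L)$ agree with those in $L$, applying the diagonal trick to $p=a\wedge b$ yields $a\le b$ or $b\le a$, proving that $[x,1[$ is a chain.

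For the second assertion, assume $[x,1[\subseteq X$ for every $x\in X$ and take a non-empty $\mathcal{A}\subseteq X$ with $I(\mathcal{A})\in X$. Any $a,b\in\mathcal{A}$ satisfy $I(\mathcal{A})\le a,b$ and $a,b<1$, so both lie in the interval $[I(\mathcal{A}),1[$. By the standing hypothesis this interval is contained in $X$, and the first part applied with $x=I(\mathcal{A})$ then gives $a\le b$ or $b\le a$. There is no real obstacle here: the only conceptually non-trivial step is the diagonal use of strong irreducibility, and everything else is routine bookkeeping with the interval $[x,1[$; Corollary~\ref{x->irred} is not needed, although the second part may be viewed as strengthening it under the extra assumption that $[x,1[\subseteq X$ for every $x\in X$.
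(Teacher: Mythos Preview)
Your argument is correct and essentially identical to the paper's own proof: both pick $a,b\in[x,1[$, note that $c:=a\wedge b$ again lies in $[x,1[\subseteq X\subseteq\mathcal{C}(L)$, and then apply the strong irreducibility of $c$ in $(\mathcal{C}(L),\wedge)$ (Theorem~\ref{characterisation_xtop}(c)) to the trivial relation $a\wedge b\le c$ to force $a\le b$ or $b\le a$; the second part is deduced in both cases by observing $\mathcal{A}\subseteq[I(\mathcal{A}),1[$ and invoking the first part with $x=I(\mathcal{A})$. Your aside that meets in $\mathcal{C}(L)$ agree with those in $L$ makes explicit a point the paper leaves implicit, but otherwise the two proofs coincide.
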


\begin{proof}
Let $x\in X$ be such that $[x,1[\subseteq X$ and $a,b\in L$ be such that $%
x\leq a,b.$ By hypothesis, $a,b$ and $c:=a\wedge b$ belong to $X$. Thus, by
Theorem \ref{characterisation_xtop}, $c$ is strongly irreducible in $(%
\mathcal{C}(L),\wedge )$, \emph{i.e.} $a=c$ or $b=c$. Hence, $a\leq b$ or $%
b\leq a$. Assume now that $[x,1[\subseteq X$ for every $x\in X$ and let $%
\mathcal{A}\subseteq X$ be a non-empty subset. If $I(\mathcal{A})\in X$,
then $[I(\mathcal{A}),1[$ is a chain and hence $\mathcal{A}\subseteq \lbrack
I(\mathcal{A}),1[$ is a chain as well.
\end{proof}

\begin{ex}
Let $R$ be an associative, not necessary commutative, ring with unity and $X=%
\mathrm{Max}(R)$ the spectrum of maximal ideals of $R$. The lattice $%
\mathcal{L}_{2}\left( {R}\right) $ of all ideals of $R$ is clearly strongly $%
X$-top. If $R$ has the property that every ideal is contained in a unique
maximal ideal (\emph{e.g.} $R$ is local), then every closed set, in
particular every \emph{connected component}, is a singleton whence $X$ is
totally disconnected.
\end{ex}

\begin{ex}
Let $X=\mathrm{Max}(_{R}R)$ be the spectrum of maximal left ideals of $R.$
The lattice $\mathcal{L}\left( _{R}{R}\right) $ of left ideals of $R$ is not
strongly $X$-top (\emph{cf.} \cite[Example 2.12]{AL2013}).
\end{ex}

\section{First Submodules}

Throughout, $R$ is an associative, not necessarily commutative, ring with
unity, $M$ is a non-zero left $R$-module, $\mathcal{L}(M)=(\mathrm{Sub}%
(M),\cap ,+,0,M)$ is the complete lattice of $R$-submodules of $M$ and $%
\mathcal{S}(M)$ is the (possibly empty) class of simple submodules of $M$.
Moreover, $\mathbf{P}=\{2,3,5,7,\cdots \}$ is the set of all prime positive
integers.

\subsection*{Prime modules}

Recall from \cite{GW2004} the following definition: $_{R}M$ is \emph{fully
faithful} iff every non-zero $R$-submodule of $M$ is faithful. Moreover,
call $_{R}M$ a \emph{prime module} iff $M$ is a non-zero fully faithful $R/%
\mathrm{ann}_{{R}}({M})$-module (see \cite[p.48]{GW2004}). It is easy to see
that $\mathrm{ann}_{{R}}({M})$ is a prime ideal if $M$ is prime module (see
\cite[Exercise 3I]{GW2004}). For every prime ideal $P$ of $R$, the cyclic
left $R$-module $M=R/P$ is a left prime module, because if $N=I/P$ is any
non-zero left $R$-submodule of $M$ with $I$ a left ideal of $R$ properly
containing $P$, then $\mathrm{ann}_{{R}}({N})I\subseteq P$, \emph{i.e.} $%
\mathrm{ann}_{{R}}({N})\subseteq P=\mathrm{ann}_{{R}}({M})$. The class of
left prime $R$-modules is denoted by $\mathbb{P}$ and is clearly closed
under non-zero submodules.

\subsection*{Prime submodules}

We call a proper submodule $N$ of $M$ a \emph{prime submodule} iff $M/N\in
\mathbb{P}.$ Taking%
\begin{equation*}
X=\mathrm{Spec}^{\mathrm{p}}(M)=\{N\in \mathcal{L}\left( {M}\right) \mid N%
\text{ is a prime submodule of }M\},
\end{equation*}%
one defines $M$ to be a \emph{top}$^{\mathrm{p}}$\emph{-module} iff $%
\mathcal{L}\left( {M}\right) $ is $X$-top (\emph{cf.} \cite{MMS1998}). There
are other choices to topologize certain subsets of $\mathcal{L}\left( {M}%
\right) $. For instance, one could take $X=\mathrm{Spec}^{\mathrm{fp}}(M),$
the class of \emph{fully prime} submodules \cite{Abu2011-a} or $X=\mathrm{%
Spec}^{\mathrm{fc}}(M)$ the class of \emph{fully coprime} submodules \cite%
{Abu2011-b}. Other choices are $X=\mathrm{Spec}^{\mathrm{c}}(M)$ the class
of coprime submodules$,$ or $X=\mathrm{Spec}^{\mathrm{s}}(M)$ the class of
\emph{second} submodules \cite{Abu}. For other possible choices for $X,$ see
the (co)primeness notions in the sense of Bican et al. \cite{BJKN80}.

\subsection*{First submodules}

In this work, we are interested in the set $X$ of those submodules of $M$
which belong to $\mathbb{P},$ \emph{i.e.} those which are, \emph{as modules}%
, prime. We set%
\begin{equation*}
\mathrm{Spec}^{\mathrm{f}}({M}):=\mathbb{P}\cap \mathcal{L}\left( {M}\right)
\end{equation*}%
and call its elements \emph{first submodules} of $M.$ We say that $_{R}M$ is
\emph{firstless}\textit{\ }iff $\mathrm{Spec}^{\mathrm{f}}(M)=\emptyset .$

The following proposition can be easily proved and includes some
characterizations of first submodules that will be used in the sequel; more
characterizations can be derived from \cite[1.22]{Wij2006}.

\begin{prop}
\label{IM}The following are equivalent for a non-zero $R$-submodule $0\neq
F\leq _{R}M.$

\begin{enumerate}
\item $F\leq _{R}M$ is a first submodule;

\item $\mathrm{ann}_{R}(F)=\mathrm{ann}_{R}(H)$ for every non-zero submodule
$0\neq H\leq _{R}M;$

\item $\mathrm{ann}_{R}(F)=\mathrm{ann}_{R}(H)$ for every non-zero fully
invariant submodule $0\neq H\leq _{R}^{\mathrm{f.i.}}M;$

\item every non-zero fully invariant submodule of $F$ is a first submodule;

\item every non-zero submodule of $F$ is a first submodule;

\item For every $r\in R$ and $f\in F$ we have:%
\begin{equation}
rRf=0\Rightarrow f=0\text{ or }rF=0.  \label{first-char}
\end{equation}
\end{enumerate}
\end{prop}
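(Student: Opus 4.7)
My plan is to prove the cycle $(1) \Rightarrow (2) \Rightarrow (3) \Rightarrow (4) \Rightarrow (5) \Rightarrow (1)$, and then handle $(6)$ separately by showing it equivalent to $(2)$. (I read conditions (2) and (3) with $H$ ranging over the non-zero, respectively non-zero fully invariant, submodules of $F$.)

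Three of the implications are essentially bookkeeping. The implication $(1) \Rightarrow (2)$ unpacks the definition of a prime module: if $F$ is fully faithful over $R/\mathrm{ann}_{R}(F)$, then every non-zero submodule $H \le F$ is faithful over this quotient, which amounts to $\mathrm{ann}_{R}(H) = \mathrm{ann}_{R}(F)$ (the inclusion $\mathrm{ann}_{R}(F) \subseteq \mathrm{ann}_{R}(H)$ being automatic). The implication $(2) \Rightarrow (3)$ is trivial since fully invariant submodules are, in particular, submodules, and $(5) \Rightarrow (1)$ is trivial since $F$ is a non-zero submodule of itself.

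The two central steps, $(3) \Rightarrow (4)$ and $(4) \Rightarrow (5)$, both rely on the \emph{fully invariant hull} $H^{\ast} := \sum_{\phi \in \mathrm{End}_{R}(F)} \phi(H)$ of a submodule $H \le F$. The crucial observation is that $\mathrm{ann}_{R}(H) = \mathrm{ann}_{R}(H^{\ast})$: if $r \in \mathrm{ann}_{R}(H)$, then $r\phi(H) = \phi(rH) = 0$ for every $\phi \in \mathrm{End}_{R}(F)$ by $R$-linearity, giving one inclusion, while the other holds because $H \subseteq H^{\ast}$. Using this, $(3) \Rightarrow (4)$ goes as follows: let $H$ be non-zero and fully invariant in $F$, and let $0 \neq H' \le H$; the fully invariant hull $H''$ of $H'$ inside $F$ is non-zero and fully invariant, so combining the observation with (3) yields $\mathrm{ann}_{R}(H') = \mathrm{ann}_{R}(H'') = \mathrm{ann}_{R}(F) = \mathrm{ann}_{R}(H)$, showing that $H$ is fully faithful over $R/\mathrm{ann}_{R}(H)$, i.e.\ first. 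The step $(4) \Rightarrow (5)$ is analogous: for any non-zero $H \le F$, the hull $H^{\ast}$ is first by (4), and since the class $\mathbb{P}$ of prime modules is closed under non-zero submodules (as recorded immediately before the proposition), $H \le H^{\ast}$ is first as well.

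Finally, I would prove $(2) \Leftrightarrow (6)$ directly. For $(2) \Rightarrow (6)$: if $rRf = 0$ with $f \neq 0$, then $0 \neq Rf \le F$ and $r \in \mathrm{ann}_{R}(Rf) = \mathrm{ann}_{R}(F)$ by (2), so $rF = 0$. For $(6) \Rightarrow (2)$: given $0 \neq H \le F$ and $r \in \mathrm{ann}_{R}(H)$, pick any $0 \neq f \in H$; then $rRf \subseteq rH = 0$ and (6) forces $rF = 0$, giving the non-trivial inclusion $\mathrm{ann}_{R}(H) \subseteq \mathrm{ann}_{R}(F)$. The only non-routine ingredient in the entire argument is the annihilator-preservation property of the fully invariant hull used twice in the central step; everything else is immediate from the definitions and from the closure of $\mathbb{P}$ under non-zero submodules.
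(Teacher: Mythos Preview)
The paper does not supply a proof of this proposition; it merely declares that it ``can be easily proved'' and refers to \cite{Wij2006} for further characterizations. Your argument is correct, including the necessary reading of conditions (2) and (3) with $H$ ranging over submodules of $F$ rather than of $M$ (the printed $H\leq_R M$ is evidently a typo, since otherwise the equivalence with (1) fails). The fully invariant hull device and the observation $\mathrm{ann}_R(H)=\mathrm{ann}_R(H^{\ast})$ that you use for $(3)\Rightarrow(4)$ and $(4)\Rightarrow(5)$ is exactly the standard trick here, and the remaining implications are indeed routine.
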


Recall that one calls $_{R}M$ is \emph{colocal} (or \emph{cocyclic} \cite%
{Wis1991}) iff the intersection of all non-zero submodules of $M$ is
non-zero.

\begin{rem}
\label{S(M)}If $0\neq F\leq _{R}M$ is simple, then $F$ is indeed a first $R$%
-submodule (\emph{i.e. }$\mathcal{S}(M)\subseteq \mathrm{Spec}^{\mathrm{f}%
}(M)$). So, if $_{R}M$ has an essential socle (called also \emph{atomic }%
\cite{HS2010}), then $\mathrm{Spec}^{\mathrm{f}}({M})\neq \emptyset .$
\end{rem}

\begin{ex}
\label{max-first}Let $0\neq F\leq _{R}M.$ If $\mathrm{ann}_{R}(F)\in \mathrm{%
Max}(R),$ then $_{R}F$ is first in $M:$ if $H\leq _{R}F$ is such that $%
\mathrm{ann}_{R}(H)F\neq 0,$ then $\mathrm{ann}_{R}(F)+\mathrm{ann}_{R}(H)=R$
whence $H=(\mathrm{ann}_{R}(F)+\mathrm{ann}_{R}(H))H=0.$ It follows that if $%
R$ is a simple ring, then every non-zero $R$-submodule of $M$ is first. In
particular, every non-zero subspace of a left vector space over a division
ring is first.
\end{ex}

\begin{exs}
\label{dr}

\begin{enumerate}
\item[(1)] If $0\neq F\leq _{R}M$ has no non-trivial fully invariant $R$%
-submodules, then $F$ is a first submodule of $M.$ For instance, $\mathbb{Q}%
\leq _{\mathbb{Z}}\mathbb{R}$ is a first submodule since $\mathbb{Q}$ has no
non-trivial fully invariant $\mathbb{Z}$-submodules.

\item[(2)] A non-zero semisimple submodule of $M$ need not be first. In case
$R$ is commutative, a semisimple $R$-submodule of $M$ is first if and only
if it is non-zero and homogeneous semisimple.

\item[(3)] Consider the $\mathbb{Z}$-module $M=\mathbb{Z}\oplus \mathbb{Q}%
\oplus \mathbb{R}$ and $F=\mathbb{Z}\oplus \mathbb{Q}.$ Every fully
invariant $\mathbb{Z}$-submodule of $F$ is of the form $n\mathbb{Z}\oplus
\mathbb{Q}$ for some $n\in \mathbb{N}$ and indeed $\mathrm{ann}_{\mathbb{Z}%
}(n\mathbb{Z}\oplus \mathbb{Q})=(0)=\mathrm{ann}_{\mathbb{Z}}(\mathbb{Z}%
\oplus \mathbb{Q}).$ It follows that $F$ is first in $M.$

\item[(4)] Let $M:=\bigoplus_{n\in \mathbb{N}}\mathbb{Z}/n\mathbb{Z}.$ The $%
\mathbb{Z}$-submodule $F:=\bigoplus_{n\in A}\mathbb{Z}/n\mathbb{Z},$ where $%
A\subseteq \mathbf{P}$ is any \emph{infinite} subset, is \emph{not} a first
submodule since for any $p\in A$ we have $p\mathbb{Z}=\mathrm{ann}_{\mathbb{Z%
}}(\mathbb{Z}/p\mathbb{Z})\neq 0=\mathrm{ann}_{\mathbb{Z}}(F).$

\item[(5)] The \emph{Pr\"{u}fer }$p$-\emph{group}%
\begin{equation*}
\mathbb{Z}_{p^{\infty }}:=\{\frac{n}{p^{k}}+\mathbb{Z}\in \mathbb{Q}/\mathbb{%
Z}\mid n\in \mathbb{Z}\text{ and }k\in \mathbb{N}\}
\end{equation*}%
is \textit{not} first in $\mathbb{Q}/\mathbb{Z}:$ if $H\lvertneqq _{\mathbb{Z%
}}\mathbb{Z}_{p^{\infty }},$ then $H=\mathbb{Z}\{\frac{1}{p^{k}}+\mathbb{Z}%
\} $ for some $k\in \mathbb{N}$ (\emph{e.g.} \cite[17.13]{Wis1991}) whence $%
\mathrm{ann}_{R}(H)\neq 0=\mathrm{ann}_{R}(\mathbb{Z}_{p^{\infty }}).$
\end{enumerate}
\end{exs}

Following \cite[p. 86]{Lam1999}, we call a (prime) ideal of $R$ an \emph{%
associated prime} of $M$ iff $\mathfrak{p}=\mathrm{ann}_{R}(N)$ for some $%
N\in \mathrm{Spec}^{\mathrm{p}}(M);$ the class of associated primes of $M$
is denoted by $\mathrm{Ass}(_{R}M).$ If $R$ is commutative, then $\mathfrak{p%
}\in \mathrm{Ass}(_{R}M)$ if and only if $\mathfrak{p}$ is prime and $%
\mathfrak{p}=(0:_{R}m)$ for some $0\neq m\in M$ (\textrm{e.g.} \cite[Lemma
3.56]{Lam1999}).

\begin{ex}
Let $R$ be a \emph{commutative} ring. If $\mathfrak{p}$ is an associated
prime of $M,$ then $R/\mathfrak{p}\hookrightarrow M$ is a first $R$%
-submodule. Notice that we might not have such an embedding if $R$ is
non-commutative (\emph{e.g.} \cite[Fact 36]{Ann2002}).
\end{ex}

\begin{rem}
\label{cop-prime}If $F\in \mathrm{Spec}^{\mathrm{f}}(M),$ then $\mathrm{ann}%
_{R}(F)$ is a prime ideal: let $I,J\in \mathcal{L}_{2}(R)$ be such that $%
IJ\subseteq \mathrm{ann}_{R}(F)$ and suppose that $J\nsubseteqq \mathrm{ann}%
_{R}(F),$ i.e. $K:=JF\neq 0.$ Since $_{R}F$ is first in $M$ and $%
IK=I(JF)=(IJ)F=0,$ we conclude that $IF=0,$ \textit{i.e.} $I\subseteq
\mathrm{ann}_{R}(F).$ Notice that the converse is not true: for example, $%
\mathrm{ann}_{\mathbb{Z}}(\mathbb{Z}\oplus \mathbb{Z}/8\mathbb{Z})=(0)$ is a
prime ideal of $\mathbb{Z};$ however, $\mathbb{Z}\oplus \mathbb{Z}/8\mathbb{Z%
}$ is not a first $\mathbb{Z}$-submodule of $\mathbb{Z}\oplus \mathbb{Z}/8%
\mathbb{Z}\oplus \mathbb{Z}/3\mathbb{Z}$ since $\mathrm{ann}_{\mathbb{Z}%
}(0\oplus \mathbb{Z}/8\mathbb{Z})=8\mathbb{Z}\neq (0).$
\end{rem}

\section{The topological structure of $\mathrm{Spec}^{\mathrm{f}}({M})$}

Throughout this section, we fix the general setting of Section 3. In
particular, $M$ is a non-zero left $R$-module over the associative unital
ring $R$ and $\mathbb{P}$ is the class of prime $R$-module. An $R$-submodule
$N\leq _{R}M$ is said to be (strongly) hollow iff $N$ is (strongly)
irreducible in $\mathcal{L}(M)^{\circ }=(\mathrm{Sub}(_{R}M),+,\cap ).$ The
class of strongly hollow submodules of $M$ is denoted by $\mathcal{SH}(M).$
In this section, we give some applications of the results in Section 2 to
the dual lattice $\mathcal{L}\left( {M}\right) ^{\circ }.$

\subsection*{Top-modules}

Since $\mathrm{Sub}\left( _{R}{N}\right) \subseteq \mathrm{Sub}\left( _{R}{M}%
\right) $ for every submodule $N$ of $M$, we have $\mathrm{Spec}^{\mathrm{f}%
}({N})\subseteq \mathrm{Spec}^{\mathrm{f}}({M})$. Hence, in order to use the
map $V$ from the second section, we will use the dual lattice $\mathcal{L}%
\left( {M}\right) ^{\circ }$ of $\mathcal{L}\left( {M}\right) $ and $X=%
\mathrm{Spec}^{\mathrm{f}}({M})$. In this case, we have the \emph{%
order-preserving} map%
\begin{equation*}
V:\mathrm{Sub}(_{R}M)\longrightarrow \mathcal{P}(X),\qquad N\mapsto
V(N)=\{P\in \mathbb{P}\mid P\subseteq N\}.
\end{equation*}

The map $V$ forms a Galois connection with the map
\begin{equation*}
I:\mathcal{P}(X)\longrightarrow \mathrm{Sub}(_{R}M),\qquad \mathcal{A}%
\mapsto I(\mathcal{A})=\sum_{P\in \mathcal{A}}P.
\end{equation*}%
As before, we have $V=V\circ I\circ V$ and $I=I\circ V\circ I$. Denote the
image of $V$ by $\xi ^{\mathrm{f}}({M})$. From Section $2,$ we know that $%
\xi ^{\mathrm{f}}({M})$ contains $X,$ $\emptyset $ and is closed under
intersections; note that because of considering the dual lattice of $%
\mathcal{L}\left( {M}\right) $ one has%
\begin{equation*}
\bigcap_{\lambda \in \Lambda }V(N_{\lambda })=V\left( \bigcap_{\lambda \in
\Lambda }N_{\lambda }\right) .
\end{equation*}%
The set $\xi ^{\mathrm{f}}({M})$ can be described as
\begin{equation*}
\xi ^{\mathrm{f}}({M})=\{V(I(\mathcal{A}))\mid \mathcal{A}\subseteq \mathrm{%
Spec}^{\mathrm{f}}({M})\}
\end{equation*}%
and depends only on those submodules that are of the form $I(\mathcal{A})$
for some subset $\mathcal{A}\subseteq \mathrm{Spec}^{\mathrm{f}}({M})$. The
image of $I$ is
\begin{equation*}
\mathcal{I}(M):=\mathcal{C}(\mathcal{L}\left( {M}\right) ^{\circ })=\{I(%
\mathcal{A})\mid \mathcal{A}\subseteq \mathrm{Spec}^{\mathrm{f}}({M})\}
\end{equation*}%
which is the set of closed elements relative to the Galois connection $(V,I)$
and forms an upper subsemilattice $(\mathcal{I}(M),+)$ of $\mathcal{L}\left(
{M}\right) ^{\circ }$. Note that $\mathrm{Spec}^{\mathrm{f}}({M})=\mathbb{P}%
\cap \mathrm{Sub}(_{R}{M)}\subseteq \mathcal{I}(M)$.

\subsection*{A lattice structure on $\mathcal{I}(M)$}

The upper semilattice of closed elements $(\mathcal{I}(M),+)$ is complete,
whence it has a greatest element (which we call the \emph{coradical} of $M$):%
\begin{equation*}
\mathrm{Corad}^{\mathrm{f}}({M})=I(\mathrm{Spec}^{\mathrm{f}}({M}%
))=\sum_{P\in \mathrm{Spec}^{\mathrm{f}}({M})}P.
\end{equation*}%
This allows defining a \emph{new} meet on $\mathcal{I}(M)$ as follows:
consider a family $\{C_{\lambda }\}_{\lambda \in \Lambda },$ where $%
C_{\lambda }=I(\mathcal{A}_{\lambda })$ and $A_{\lambda }\subseteq \mathrm{%
Spec}^{\mathrm{f}}(M)$ for each $\lambda \in \Lambda ,$ and define%
\begin{eqnarray*}
\widetilde{\dbigwedge }C_{\lambda } &=&IV(\bigcap_{\lambda \in \Lambda
}C_{\lambda })=I(\bigcap_{\lambda \in \Lambda }V(C_{\lambda })) \\
&=&\sum \{I(\mathcal{A})\mid I(\mathcal{A})\leq C_{\lambda }\text{ }\forall
\text{ }\lambda \in \Lambda \} \\
&=&\sum \{F\in \mathrm{Spec}^{\mathrm{f}}(M)\mid F\leq \bigcap_{\lambda \in
\Lambda }C_{\lambda }\}.
\end{eqnarray*}%
Notice that this new meet $\widetilde{\wedge }$ is usually \emph{different}
from the original meet $\cap .$

\begin{defn}
We say that $M$ is a

\emph{top}{$^{\mathrm{f}}$}\emph{-module} iff $\mathcal{L}\left( {M}\right)
^{\circ }$ is $\mathrm{Spec}^{\mathrm{f}}(M)$-top, \emph{i.e.} iff $\xi ^{%
\mathrm{f}}({M})$ is closed under finite unions;

\emph{strongly top}$^{\mathrm{f}}$\emph{-module }iff $\mathcal{L}\left( {M}%
\right) ^{\circ }$ is strongly $\mathrm{Spec}^{\mathrm{f}}(M)$-top, \emph{%
i.e.} iff every first submodule of $M$ is strongly hollow.
\end{defn}

From Theorem \ref{characterisation_xtop} and Corollary \ref{uniserial_xtop}
we get

\begin{thm}
\label{characterisation_top}The following statements are equivalent:

\begin{enumerate}
\item[(a)] $M$ is a top$^{\mathrm{f}}$-module;

\item[(b)] $V:(\mathcal{I}(M),\widetilde{\wedge },+)\rightarrow (\xi ^{%
\mathrm{f}}({M}),\cap ,\cup )$ is a lattice isomorphism;

\item[(c)] every first submodule of $M$ is strongly hollow in $\mathrm{Corad}%
^{\mathrm{f}}({M});$

\item[(d)] $(\mathcal{I}(M),\widetilde{\wedge },+)$ is a distributive
lattice and every first submodule of $M$ is a hollow \emph{(}uniserial\emph{)%
} module.
\end{enumerate}
\end{thm}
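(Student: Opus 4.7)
My plan is to apply the general framework of Section~2 to the opposite lattice $\mathcal{L}(M)^{\circ}$ with $X=\mathrm{Spec}^{\mathrm{f}}(M)$, and to bridge the gap between its abstract conclusion and the module-theoretic statement (d) here by invoking Corollary~\ref{uniserial_xtop} in conjunction with Proposition~\ref{IM}(5). The relevant dictionary is: the abstract meet $\wedge$ becomes the sum $+$ of submodules; the closure lattice $\mathcal{C}(L)$ becomes $\mathcal{I}(M)$, whose greatest element is $\mathrm{Corad}^{\mathrm{f}}(M)$; the new join $\widetilde{\vee}$ becomes the new meet $\widetilde{\wedge}$; ``(strongly) irreducible in $\mathcal{C}(L)$'' translates to ``(strongly) hollow in $\mathrm{Corad}^{\mathrm{f}}(M)$''; and the order-reversing map $V$ becomes order-preserving, so the anti-homomorphism of Theorem~\ref{characterisation_xtop}(b) turns into a lattice homomorphism, which is bijective on closed elements and hence an isomorphism between $\mathcal{I}(M)$ and $\xi^{\mathrm{f}}({M})$.

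Under this dictionary, the implications (a)$\Leftrightarrow$(b)$\Leftrightarrow$(c) of the present theorem are immediate translations of (a)$\Leftrightarrow$(b)$\Leftrightarrow$(c) of Theorem~\ref{characterisation_xtop}. For (a)$\Rightarrow$(d), Theorem~\ref{characterisation_xtop}(a)$\Rightarrow$(d) yields the distributivity of $(\mathcal{I}(M), \widetilde{\wedge}, +)$ directly. To promote the abstract conclusion ``every first submodule is irreducible in $\mathcal{I}(M)$'' to the genuine module-theoretic statement ``every first submodule is uniserial (in particular hollow)'', I would argue as follows: by Proposition~\ref{IM}(5), every non-zero submodule of a first submodule $F$ is again a first submodule, so in the opposite order the interval $[F,0[$ (i.e.\ the set of non-zero submodules of $F$) lies entirely in $X$. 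Corollary~\ref{uniserial_xtop} then forces $[F,0[$ to be a chain, so $F$ is uniserial as an $R$-module and a fortiori hollow.

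For (d)$\Rightarrow$(c), note that a hollow submodule $F$ is by definition irreducible in $(\mathcal{L}(M)^{\circ},+)$ and hence a fortiori in the subsemilattice $(\mathcal{I}(M),+)$; combined with the assumed distributivity this matches the hypothesis of Theorem~\ref{characterisation_xtop}(d), whose implication (d)$\Rightarrow$(c) delivers the strong hollowness of $F$ in $\mathrm{Corad}^{\mathrm{f}}(M)$, which is exactly (c). The only genuinely new input beyond Section~2 is the observation that Proposition~\ref{IM}(5) guarantees the hypothesis of Corollary~\ref{uniserial_xtop} uniformly for every $F \in X$; everything else is bookkeeping in the dualized lattice. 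The main (indeed only) obstacle I anticipate is keeping the direction of the order straight when translating ``strongly irreducible in $\mathcal{C}(L)$'' into the correct module-theoretic notion of ``strongly hollow in $\mathrm{Corad}^{\mathrm{f}}(M)$''.
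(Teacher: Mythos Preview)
Your proposal is correct and follows essentially the same route as the paper: the equivalences (a)--(d) are obtained by specializing Theorem~\ref{characterisation_xtop} to $\mathcal{L}(M)^{\circ}$ with $X=\mathrm{Spec}^{\mathrm{f}}(M)$, and the upgrade from ``irreducible in $\mathcal{I}(M)$'' to ``uniserial module'' in (d) is done exactly as you describe, via Proposition~\ref{IM}(5) feeding Corollary~\ref{uniserial_xtop}. Your write-up is in fact more explicit than the paper's, which compresses the whole argument into a reference to Theorem~\ref{characterisation_xtop} plus the one-line observation that $[P,0[\subseteq \mathrm{Spec}^{\mathrm{f}}(M)$.
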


\begin{proof}
The equivalence follows from Theorem \ref{characterisation_xtop}.

Every $R$-submodule of $P\in \mathrm{Spec}^{\mathrm{f}}({M})$ is also a
prime module, hence $[P,0[\subseteq \mathrm{Spec}^{\mathrm{f}}({M})$ in $%
\mathcal{L}\left( {M}\right) ^{\circ }$. Thus, Corollary \ref{uniserial_xtop}
applies and proves that every $P\in \mathrm{Spec}^{\mathrm{f}}({M})$ is
uniserial.
\end{proof}

\begin{lem}
\label{Steph}If $\mathrm{Soc}(_{R}M)\neq 0,$ then the following are
equivalent:

\begin{enumerate}
\item[(a)] All isomorphic simple submodules of $M$ are equal.

\item[(b)] $\mathrm{Soc}(M)$ is a direct sum of non-isomorphic simple
modules;

\item[(c)] $\mathrm{Soc}(M)$ is distributive;
\end{enumerate}
\end{lem}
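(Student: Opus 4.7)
The plan is to exploit the homogeneous decomposition of the socle. Recall that $\mathrm{Soc}(_{R}M)=\bigoplus_{\lambda \in \Lambda }H_{\lambda }$, where each homogeneous component $H_{\lambda }$ is the sum (equivalently, the internal direct sum) of all simple submodules of $M$ belonging to a fixed isomorphism class $[S_{\lambda }]$. Every simple submodule of $\mathrm{Soc}(M)$ lies inside exactly one such $H_{\lambda }$, determined by its isomorphism class.

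For the equivalence (a)$\Leftrightarrow $(b), I would argue that condition (a) holds precisely when each homogeneous component $H_{\lambda }$ contains a unique simple submodule, which forces $H_{\lambda }$ itself to be simple (if $H_{\lambda }$ had dimension $\geq 2$ over $\mathrm{End}(S_{\lambda })$, one could exhibit another simple submodule by a diagonal construction as in the last step below). Thus $\mathrm{Soc}(M)=\bigoplus_{\lambda }H_{\lambda }$ becomes a direct sum of pairwise non-isomorphic simples, giving (b). Conversely, if (b) holds with $\mathrm{Soc}(M)=\bigoplus_{i\in I}S_{i}$, then the summands $S_{i}$ are exactly the homogeneous components, each of which is simple; so (a) follows.

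For (b)$\Rightarrow $(c), under (b) every submodule of $\mathrm{Soc}(M)$ is of the form $\bigoplus_{i\in J}S_{i}$ for some $J\subseteq I$ (a submodule projects isomorphically onto its support), so the submodule lattice of $\mathrm{Soc}(M)$ is isomorphic to the power set $\mathcal{P}(I)$, which is distributive. For the reverse direction (c)$\Rightarrow $(a), I would argue by contrapositive: assume $S_{1}\neq S_{2}$ are two distinct simple submodules of $M$ with $S_{1}\cong S_{2}$. Since both are simple, $S_{1}\cap S_{2}=0$ and $S_{1}+S_{2}=S_{1}\oplus S_{2}$. Fix an isomorphism $\phi :S_{1}\rightarrow S_{2}$ and form the diagonal $D:=\{s+\phi (s)\mid s\in S_{1}\}\subseteq S_{1}\oplus S_{2}$, which is a third simple submodule satisfying $S_{1}+D=S_{1}\oplus S_{2}=S_{2}+D$. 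Then
\[
S_{1}\cap (S_{2}+D)=S_{1}\cap (S_{1}\oplus S_{2})=S_{1}\neq 0=(S_{1}\cap S_{2})+(S_{1}\cap D),
\]
violating distributivity of $\mathrm{Soc}(M)$.

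The main technical point is the diagonal construction in the step (c)$\Rightarrow $(a), which is also implicitly needed in (a)$\Leftrightarrow $(b) to rule out non-simple homogeneous components; once this construction is in hand, the rest reduces to standard bookkeeping about semisimple modules, with no serious obstacle.
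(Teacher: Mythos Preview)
Your argument is correct. The diagonal construction in $(c)\Rightarrow(a)$ is the right idea, and your identification of the submodule lattice of $\bigoplus_{i}S_{i}$ with $\mathcal{P}(I)$ under hypothesis (b) is justified exactly by the projection argument you sketch (a simple submodule projects isomorphically onto one $S_{i}$ and trivially onto the others by pairwise non-isomorphism, hence equals that $S_{i}$).

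The paper takes a different route: rather than arguing directly, it invokes two results of Stephenson on distributive modules. For $(b)\Leftrightarrow(c)$ it cites \cite[Proposition~1.3]{Ste2004}, which characterizes distributivity of a direct sum $\bigoplus_{\lambda}E_{\lambda}$ by the condition that the summands be pairwise \emph{unrelated}; for simple summands this reduces to $\mathrm{Hom}_{R}(E_{\alpha},E_{\beta})=0$. For $(c)\Rightarrow(a)$ it cites \cite[Proposition~1.2]{Ste2004}, which says that under the unrelatedness hypothesis every submodule $X$ of $\bigoplus_{\lambda}E_{\lambda}$ satisfies $X=\bigoplus_{\lambda}(X\cap E_{\lambda})$, so a simple $X$ must coincide with some $E_{\lambda}$. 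Your approach is more elementary and fully self-contained, and it isolates the essential obstruction (the diagonal simple) explicitly; the paper's approach is shorter on the page but outsources the content to the cited reference. Either way the underlying mechanism is the same: distributivity fails precisely when a homogeneous component has length at least two.
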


\begin{proof}
$(a)\Rightarrow (b)$ this is clear.

$(b)\Longleftrightarrow (c)$ By \cite[Proposition 1.3]{Ste2004}, $\mathrm{Soc%
}(M)=\bigoplus_{\lambda \in \Lambda }E_{\lambda }$ is distributive if and
only if $E_{\alpha }$ and $E_{\beta }$ are \emph{unrelated} for all $\alpha
\neq \beta $ in $\Lambda $; the later means for simple modules that $\mathrm{%
Hom}_{R}(E_{\alpha },E_{\beta })=0$.

$(c)\Rightarrow (a)$ By \cite[Proposition 1.2]{Ste2004}, if $\mathrm{Soc}%
(M)=\bigoplus_{\lambda \in \Lambda }E_{\lambda }$ ($E_{\lambda }$ is simple
for each $\lambda \in \Lambda $) and $E_{\alpha }$ is unrelated to $E_{\beta
}$ for all $\alpha \neq \beta $ in $\Lambda $, then for every submodule $%
X\subseteq \bigoplus_{\lambda \in \Lambda }E_{\lambda }$ one has $%
X=\bigoplus_{\lambda \in \Lambda }(X\cap E_{\lambda })$. In particular, if $%
X $ is simple, then $X=E_{\lambda }$ for some $\lambda \in \Lambda $.
\end{proof}

\begin{cor}
\label{simple-0}If $_{R}M$ is a top$^{\mathrm{f}}$-module, then $\mathrm{Soc}%
(M)$ is a \emph{(}direct\emph{)} sum of non-isomorphic simple modules.
\end{cor}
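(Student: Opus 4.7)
The plan is to reduce to condition (a) of Lemma~\ref{Steph}, namely that any two isomorphic simple submodules of $M$ must coincide; the conclusion then follows from the equivalence (a)$\Leftrightarrow$(b) of that lemma.

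The main lever is Theorem~\ref{characterisation_top}(c) combined with Remark~\ref{S(M)}: every simple submodule of $M$ is automatically a first submodule, so the top${}^{\mathrm{f}}$-hypothesis guarantees that each simple submodule is strongly hollow in $\mathrm{Corad}^{\mathrm{f}}(M)$. In particular, every simple submodule lies in $\mathrm{Spec}^{\mathrm{f}}(M) \subseteq \mathcal{I}(M)$.

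I would proceed by contradiction, assuming the existence of distinct isomorphic simple submodules $E_1 \neq E_2$ of $M$ and fixing an isomorphism $\varphi : E_1 \to E_2$. Since distinct simple submodules intersect trivially we have $E_1 \cap E_2 = 0$, and so $E_1+E_2 = E_1\oplus E_2$. I would then introduce the diagonal submodule
\[
E_3 := \{x + \varphi(x) \mid x \in E_1\} \subseteq E_1 + E_2,
\]
which is routinely checked to be an $R$-submodule of $M$ and isomorphic to $E_1$ via $x\mapsto x+\varphi(x)$; in particular $E_3$ is simple and hence itself a first submodule. The triviality $E_1\cap E_2 = 0$ forces $E_3\neq E_1$ and $E_3\neq E_2$, and since all three are simple, any containment $E_3\subseteq E_i$ would already mean $E_3 = E_i$. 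Hence $E_3\not\subseteq E_1$ and $E_3\not\subseteq E_2$. On the other hand, strong hollowness of $E_3$ applied to the inclusion $E_3\subseteq E_1+E_2$ (with $E_1,E_2\in \mathcal{I}(M)\subseteq \mathrm{Sub}(\mathrm{Corad}^{\mathrm{f}}(M))$) forces $E_3\subseteq E_1$ or $E_3\subseteq E_2$, the sought contradiction. This establishes condition (a) of Lemma~\ref{Steph}, and the corollary follows.

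No serious obstacle is anticipated. The diagonal-submodule construction is entirely standard, and the substantive structural content — that simple submodules of a top${}^{\mathrm{f}}$-module are strongly hollow — is already packaged by Theorem~\ref{characterisation_top}(c) together with Remark~\ref{S(M)}. The only minor bookkeeping is to verify that $E_3$ genuinely differs from $E_1$ and $E_2$, which is immediate from $E_1\cap E_2=0$.
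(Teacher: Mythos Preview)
Your argument is correct, but it takes a different route from the paper. The paper invokes condition~(d) of Theorem~\ref{characterisation_top}: since $(\mathcal{I}(M),\widetilde{\wedge},+)$ is distributive and $\mathcal{L}(\mathrm{Soc}(M))$ sits inside it as a sublattice (every semisimple submodule is a sum of simple, hence first, submodules, and on such submodules $\widetilde{\wedge}$ coincides with $\cap$), the lattice $\mathcal{L}(\mathrm{Soc}(M))$ is itself distributive; one then finishes via the equivalence (c)$\Leftrightarrow$(b) of Lemma~\ref{Steph}. You instead use condition~(c) of Theorem~\ref{characterisation_top} --- strong hollowness of first submodules in $\mathcal{I}(M)$ --- and feed it the classical diagonal submodule inside $E_1\oplus E_2$ to verify condition~(a) of Lemma~\ref{Steph} directly. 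Your approach is more hands-on and entirely self-contained; the paper's is slicker but silently relies on checking that $\widetilde{\wedge}$ restricts to ordinary intersection on $\mathrm{Sub}(\mathrm{Soc}(M))$, a point it does not spell out.
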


\begin{proof}
This follows from the fact that $\mathcal{L}(\mathrm{Soc}(M))=(\mathrm{Sub}(%
\mathrm{Soc}(M),\cap ,+))$ is a sublattice of the distributive lattice $(%
\mathcal{I}(M),\widetilde{\wedge },+)$, whence is also distributive. This is
equivalent, by Lemma \ref{Steph}, to the stated property for $\mathrm{Soc}%
(M).$
\end{proof}

\begin{rem}
\label{min}Recall from \cite{Abu2011-b} that $_{R}M$ has the \emph{%
min-property} iff for every simple $R$-submodule $H\leq _{R}M$ we have $%
H\nsubseteqq H_{e},$ where $H_{e}:=\sum\limits_{K\in S(M)\backslash \{H\}}K.$
By Lemma \ref{Steph} and \cite[Theorem 2.3]{Smi2011}, $\mathrm{Soc}(M)$ is
distributive if and only if $_{R}M$ has the min-property.
\end{rem}

\begin{notation}
We set $\mathrm{Sub}_{c}(M):=\{(0:_{M}I)\mid I\in \mathcal{L}_{2}(R)\},$ $%
\mathcal{X}(L):=\mathrm{Spec}^{\mathrm{f}}(M)\backslash V(L)$ and%
\begin{equation*}
\begin{tabular}{lllllll}
$\xi ^{\mathrm{f}}(M)$ & $:=$ & $\{V(L)\mid L\in \mathrm{Sub}(_{R}M)\};$ &
& $\xi _{c}^{\mathrm{f}}(M)$ & $:=$ & $\{V(L)\mid L\in \mathrm{Sub}%
_{c}(M)\}; $ \\
$\tau ^{\mathrm{f}}(M)$ & $:=$ & $\{\mathcal{X}(L)\mid L\in \mathrm{Sub}%
(_{R}M)\};$ &  & $\tau _{c}^{\mathrm{f}}(M)$ & $:=$ & $\{\mathcal{X}(L)\mid
L\in \mathrm{Sub}_{c}(M)\};$%
\end{tabular}%
\end{equation*}
\end{notation}

\begin{rem}
\label{strong->top}Let $M$ be a strongly top$^{\mathrm{f}}$-module.

\begin{enumerate}
\item[(a)] $M$ is a top$^{\mathrm{f}}$-module: this follows directly from
observation that $\mathrm{Spec}^{\mathrm{f}}(M)\subseteq \mathcal{SH}(M)$ if
and only if $V(L_{1})\cup V(L_{2})=V(L_{1}+L_{2})$ for every pair of
submodules $L_{1},L_{2}\leq _{R}M.$

\item[(c)] $\mathrm{Spec}^{\mathrm{f}}(M)$ has a basis of open sets given by
\begin{equation*}
\{\mathcal{X}^{\mathrm{f}}(H)\mid H\leq _{R}M\text{ is finitely generated}\}
\end{equation*}
\end{enumerate}
\end{rem}

\begin{theorem}
\label{f-top}$(\mathrm{Spec}^{\mathrm{f}}(M),\tau _{c}^{\mathrm{f}}(M))$ is
a topological space.
\end{theorem}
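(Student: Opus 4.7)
The plan is to verify the three axioms of a topology for $\tau_c^{\mathrm{f}}(M)$ by translating them to corresponding closure properties of the family $\xi_c^{\mathrm{f}}(M) = \{V(L) \mid L \in \mathrm{Sub}_c(M)\}$ of candidate closed sets.

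For the trivial axioms I would observe that $R \in \mathcal{L}_2(R)$ gives $(0:_M R) = 0 \in \mathrm{Sub}_c(M)$ with $V(0) = \{P \in \mathbb{P} \mid P \subseteq 0\} = \emptyset$, so $\mathcal{X}(0) = \mathrm{Spec}^{\mathrm{f}}(M) \in \tau_c^{\mathrm{f}}(M)$; and $(0) \in \mathcal{L}_2(R)$ gives $(0:_M 0) = M \in \mathrm{Sub}_c(M)$ with $V(M) = \mathrm{Spec}^{\mathrm{f}}(M)$, so $\mathcal{X}(M) = \emptyset \in \tau_c^{\mathrm{f}}(M)$. For arbitrary unions, I would show that $\xi_c^{\mathrm{f}}(M)$ is closed under arbitrary intersections: given $\{I_\lambda\}_\lambda \subseteq \mathcal{L}_2(R)$, the elementary identity $\bigcap_\lambda (0:_M I_\lambda) = (0:_M \sum_\lambda I_\lambda)$ together with the fact that $\sum_\lambda I_\lambda$ is again a two-sided ideal, and the set-theoretic identity $V(\bigcap_\lambda N_\lambda) = \bigcap_\lambda V(N_\lambda)$ (immediate from the definition of $V$), yield
\[
\bigcap_\lambda V((0:_M I_\lambda)) = V((0:_M \textstyle\sum_\lambda I_\lambda)) \in \xi_c^{\mathrm{f}}(M).
\]

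The main and only genuinely non-trivial step is closure under finite intersections of opens, equivalently finite unions inside $\xi_c^{\mathrm{f}}(M)$. For $L_i = (0:_M I_i)$ with $I_i \in \mathcal{L}_2(R)$ I would establish the key identity
\[
V(L_1) \cup V(L_2) = V((0:_M I_1 I_2)).
\]
The inclusion \emph{from left to right} is formal: since $I_1, I_2$ are two-sided, $I_1 I_2 \subseteq I_j$ for $j=1,2$, so $F \subseteq (0:_M I_j)$ forces $I_1 I_2 F \subseteq I_j F = 0$, that is, $F \subseteq (0:_M I_1 I_2)$. The reverse inclusion is where the defining property of first submodules enters: if $I_1 I_2 F = 0$, then $I_1 I_2 \subseteq \mathrm{ann}_R(F)$, and by Remark \ref{cop-prime} (the annihilator of a first submodule is a prime ideal) either $I_1 \subseteq \mathrm{ann}_R(F)$ or $I_2 \subseteq \mathrm{ann}_R(F)$, whence $F \in V(L_1) \cup V(L_2)$. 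Since $I_1 I_2 \in \mathcal{L}_2(R)$, we have $(0:_M I_1 I_2) \in \mathrm{Sub}_c(M)$ and the union belongs to $\xi_c^{\mathrm{f}}(M)$; taking complements gives $\mathcal{X}(L_1) \cap \mathcal{X}(L_2) \in \tau_c^{\mathrm{f}}(M)$.

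The only real obstacle is this last step, and it is handled cleanly by Remark \ref{cop-prime}; the proof is the precise module-theoretic analogue of the classical identity $V(I_1) \cup V(I_2) = V(I_1 I_2)$ for the Zariski topology on $\mathrm{Spec}(R)$, with the primeness of annihilators of first submodules playing the role of the primeness of points.
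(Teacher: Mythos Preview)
Your proof is correct and follows essentially the same approach as the paper's. The only minor difference is in the justification of the reverse inclusion $V((0:_{M}I_{1}I_{2}))\subseteq V(L_{1})\cup V(L_{2})$: you invoke Remark~\ref{cop-prime} (primeness of $\mathrm{ann}_{R}(F)$) to split the product $I_{1}I_{2}\subseteq\mathrm{ann}_{R}(F)$, whereas the paper argues directly from the definition of a first submodule (if $\widetilde{I}F\neq 0$ then $\mathrm{ann}_{R}(\widetilde{I}F)=\mathrm{ann}_{R}(F)$, so $I(\widetilde{I}F)=0$ forces $IF=0$); these are equivalent, since Remark~\ref{cop-prime} is itself derived from that same defining property.
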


\begin{proof}
It is obvious that $V(0)=\emptyset ,$ $V(M)=\mathrm{Spec}^{\mathrm{f}}(M)$
and that $\bigcap\limits_{\lambda \in \Lambda }V(L_{\lambda
})=V(\bigcap\limits_{\lambda \in \Lambda }L_{\lambda })$ for every subset $%
\{L_{\lambda }\}_{\Lambda }\subseteq \mathrm{Sub}_{c}(M)$. We show now that
for all ideals $I,\widetilde{I}$ or $R$ we have
\begin{equation}
V((0:_{M}I))\cup V((0:_{M}\widetilde{I}))=V((0:_{M}I)+(0:_{M}\widetilde{I}%
))=V((0:_{M}I\cap \widetilde{I}))=V((0:_{M}I\widetilde{I})).  \label{L1+L2}
\end{equation}%
Indeed, the following inclusions are obvious%
\begin{equation}
V((0:_{M}I))\cup V((0:_{M}\widetilde{I}))\subseteq V((0:_{M}I)+(0:_{M}%
\widetilde{I}))\subseteq V((0:_{M}I\cap \widetilde{I}))\subseteq V((0:_{M}I%
\widetilde{I}))  \label{IJ-com}
\end{equation}%
On the other hand, let $F\in V((0:_{M}I\widetilde{I}))$ and suppose that $%
F\nsubseteqq (0:_{M}\widetilde{I}).$ Since $I(\widetilde{I}F)=(I\widetilde{I}%
)F=0$ and $\widetilde{I}F\neq 0,$ we conclude that $IF=0$ (recall that $F$
is a first submodule of $M$), \emph{i.e.} $F\subseteq (0:_{M}I).$
Consequently, $F\in V((0:_{M}I))\cup \mathcal{V}^{\mathrm{f}}((0:_{M}%
\widetilde{I})).$
\end{proof}

\begin{ex}
For every non-empty subset $A\subseteq \mathbf{P},$ the $\mathbb{Z}$-module $%
M:=\oplus _{p\in A}\mathbb{Z}/p\mathbb{Z}$ (with no repetition) is a top$^{%
\mathrm{f}}$-module: it can be easily seen that $\mathrm{Spec}^{\mathrm{f}%
}(M)=\{\mathbb{Z}/p\mathbb{Z}\mid p\in A\}$ and that $\xi ^{\mathrm{f}}(M)$
is closed under finite unions.
\end{ex}

\begin{ex}
Let $R=\mathbb{Z}$. The prime $\mathbb{Z}$-modules are the torsionfree
modules and the Abelian $p$-groups, for $p$ a prime number. If $M$ is a
torsion abelian group, then the first submodules are the $p$-subgroups of $M$%
. Since $M=\bigoplus\limits_{p\text{ a prime number}}T_{p}(M)$, where $%
T_{p}(M)$ is the $p$-torsion part of $M$, every submodule of $M$ is a sum of
$p$-subgroups, \emph{i.e.} the lattice $\mathrm{Im}(I)$ is equal to the
whole lattice $\mathcal{L}\left( {M}\right) $ of subgroups of $M$. If $M$ is
a top$^{\mathrm{f}}$-module, then $T_{p}(M)$ has to be uniserial for each $p$
by Theorem \ref{characterisation_top}. On the other hand, if the $p$-torsion
parts $T_{p}(M)$ of $M$ are uniserial, then for every $p$-subgroup $N$ of $M$
contained in a sum of subgroups $H+K$ one has $N\subseteq
T_{p}(H+K)=T_{p}(H)+T_{p}(K)$. Since $T_{p}(M)$ is uniserial, $%
T_{p}(H+K)=T_{p}(H)$ or $T_{p}(H+K)=T_{p}(K)$. Hence $N\subseteq H$ or $%
N\subseteq K$. This shows that a torsion abelian group is a top$^{\mathrm{f}%
} $-module if and only if all its $p$-torsion parts are uniserial. For
instance, $\mathbb{Q}/\mathbb{Z}$ is top$^{\mathrm{f}}$-module.
\end{ex}

\begin{ex}
Over a simple ring $R$, every non-zero left $R$-module is prime. Theorem \ref%
{characterisation_top} shows that the (strongly) top$^{\mathrm{f}}$-modules%
\textit{\ }over a simple ring are precisely the non-zero uniserial modules.
\end{ex}

\begin{rems}
\label{simple-char}Let $M$ be a \textit{top}$^{\mathrm{f}}$\textit{-module,}
$H$ a non-zero submodule of $M$ and set $\mathcal{X}^{\mathrm{f}}(H)=\mathrm{%
Spec}^{\mathrm{f}}({M})\backslash V(H).$

\begin{enumerate}
\item[(a)] $\mathrm{Spec}^{\mathrm{f}}({M})$ is a $T_{0}$ (Kolmogorov) space.

\item[(c)] The closure of any subset $\mathcal{A}\subseteq \mathrm{Spec}^{%
\mathrm{f}}(M)$ is $\overline{\mathcal{A}}=V(I(\mathcal{A})\mathcal{)}.$

\item[(d)] ${\mathcal{X}}(H)=\emptyset $ if and only if $\mathrm{Corad}^{%
\mathrm{f}}({M})\subseteq H$.

\item[(e)] If $_{R}M$ has \emph{essential} socle, then $\mathrm{Spec}^{%
\mathrm{f}}({H})=\emptyset $ if and only if $H=0.$

\item[(f)] $\mathrm{Spec}^{\mathrm{f}}({H})$ is a subspace of $\mathrm{Spec}%
^{\mathrm{f}}({M})$.

\item[(g)] If $M\simeq N$, then $\mathrm{Spec}^{\mathrm{f}}({M})\approx
\mathrm{Spec}^{\mathrm{f}}({N})$ are homeomorphic and $\mathrm{Corad}^{%
\mathrm{f}}({M})\simeq \mathrm{Corad}^{\mathrm{f}}({N})$.
\end{enumerate}
\end{rems}

Recall (\emph{e.g.} \cite{Tug2004}, \cite{A-TF2007}) that $M$ is said to be
a \emph{multiplication} (\emph{comultiplication}) \emph{module} iff every $R$%
-submodule of $M$ is of the form $IM$ ($(0:_{M}I$) for some ideal $I$ of $R,$
or equivalently iff for every $R$-submodule $H\leq _{R}M$ we have $%
H=(H:_{R}M)M$ ($L=(0:_{M}(0:_{R}L)$).

\begin{prop}
\label{c-ann}Let $0\neq F\leq _{R}M.$

\begin{enumerate}
\item[(a)] If $_{R}F$ is comultiplication, then $F$ is first in $M$ if and
only if $_{R}F$ is simple.

\item[(b)] If $_{R}F$ is multiplication, then $F$ is first in $M$ if and
only if $\mathrm{ann}_{R}(F)$ is a prime ideal.
\end{enumerate}
\end{prop}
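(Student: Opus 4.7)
The plan is to reduce both parts to the characterisation of first submodules given in Proposition \ref{IM}(2), namely that $F$ is first iff $\mathrm{ann}_R(F)=\mathrm{ann}_R(H)$ for every $0\neq H\leq_R F$, and to feed into this characterisation the explicit description of submodules of $F$ coming from the (co)multiplication hypothesis.

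For part (a), one direction will follow at once from Remark \ref{S(M)}: simple submodules are first. For the converse, I take $0\neq H\leq_R F$ and use the comultiplication property to write $H=(0:_F I)$ for some ideal $I$ of $R$. Then $IH=0$ combined with $\mathrm{ann}_R(H)=\mathrm{ann}_R(F)$ (which is available because $F$ is first) yields $I\subseteq\mathrm{ann}_R(F)$, so $IF=0$ and consequently $F\subseteq(0:_F I)=H$. Thus $H=F$, so $F$ has no proper non-zero submodules and is therefore simple.

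For part (b), the forward direction is Remark \ref{cop-prime}. For the converse, set $\mathfrak{p}:=\mathrm{ann}_R(F)$ and assume $\mathfrak{p}$ is prime. Taking an arbitrary $0\neq H\leq_R F$, the multiplication hypothesis writes $H=IF$ for some ideal $I$ of $R$. For any $r\in\mathrm{ann}_R(H)$ one has $rIF=0$; I then pass from the element $r$ to the two-sided ideal $RrR$ and note that $(RrR)\,I\cdot F \subseteq R(rIF)=0$, so the ideal product $(RrR)\cdot I$ lies in $\mathfrak{p}$. Primality forces either $RrR\subseteq\mathfrak{p}$ (equivalently $r\in\mathfrak{p}$) or $I\subseteq\mathfrak{p}$; the latter would give $H=IF=0$, contradicting $H\neq 0$. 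Hence $\mathrm{ann}_R(H)\subseteq\mathrm{ann}_R(F)$, the reverse inclusion is automatic, and $F$ is first by Proposition \ref{IM}.

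The main subtle point is the non-commutative move in (b): since $r$ is merely an element and $rI$ need not be a two-sided ideal, primality of $\mathfrak{p}$ cannot be invoked directly on the product $rI$, and one must first enlarge $r$ to the two-sided ideal $RrR$ and use the identity $RI=I$ to keep the annihilation intact. Everything else is routine manipulation of the two defining identities $H=(0:_F I)$ and $H=IF$.
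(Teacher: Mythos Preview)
Your proof is correct and follows essentially the same approach as the paper's. One simplification worth noting for part (b): since $\mathrm{ann}_R(H)$ is already a two-sided ideal (being the annihilator of a left $R$-module), the paper applies primality directly to the product $\mathrm{ann}_R(H)\cdot J\subseteq \mathfrak{p}$ (where $H=JF$), making your element-wise $RrR$ detour unnecessary.
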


\begin{proof}
(a) If $_{R}F$ is simple, then $F$ is first in $M$ by Remark \ref{S(M)}. On
the other hand, let $F$ be first in $M,$ $0\neq H\leq _{R}F$ and consider $%
I:=\mathrm{ann}_{R}(H).$ Since $F$ is first in $M,$ we have $I=\mathrm{ann}%
_{R}(F)$ and so $H=(0:_{F}(0:_{R}H))=(0:_{F}(0:_{R}F))=F,$ i.e. $_{R}F$ is
simple.

(b) If $F$ is first in $M,$ then $\mathrm{ann}_{R}(F)$ is a prime ideal by
Remark \ref{cop-prime}. On the other hand, assume that $\mathrm{ann}%
_{R}(F)\in \mathrm{Spec}(R).$ Let $0\neq H\leq _{R}F$ and consider $I:=%
\mathrm{ann}_{R}(H).$ Since $_{R}F$ is multiplication, $H=JF$ for some $J\in
\mathcal{L}_{2}(R).$ Notice that $IJ\subseteq \mathrm{ann}_{R}(F),$ whence $%
IF=0$ since $\mathrm{ann}_{R}(F)$ is a prime ideal and $J\nsubseteqq \mathrm{%
ann}_{R}(F).$ Consequently, $_{R}F$ is first.
\end{proof}

\begin{rem}
\label{zero-dim}Let $R$ be zero-dimensional (\emph{i.e.} every prime ideal
of $R$ is maximal). It follows by Example \ref{max-first} and Remark \ref%
{cop-prime} that
\begin{equation*}
\mathrm{Spec}^{\mathrm{f}}(M)=\{F\leq _{R}M\mid \mathrm{ann}_{R}(F)\text{ is
prime ideal}\}.
\end{equation*}%
Examples of zero-dimensional rings include biregular rings \cite[3.18]%
{Wis1991} and left (right) perfect rings.
\end{rem}

\begin{defn}
Let $0\neq H\leq _{R}M.$ A maximal element of $V(H),$ if any, is said to be
\emph{maximal under }$H.$ A \emph{maximal element of }$\mathrm{Spec}^{%
\mathrm{f}}(M)$ is said to be a \textit{maximal first submodule} of $M.$
\end{defn}

\begin{lem}
\label{s-max}Let $_{R}M$ have an essential socle and

\begin{enumerate}
\item $R$ is zero-dimensional; \emph{or}

\item every submodule of $_{R}M$ is multiplication.
\end{enumerate}

For every $0\neq H\leq _{R}M,$ there exists $F\in \mathrm{Spec}^{\mathrm{f}%
}(M)$ which is maximal under $H.$
\end{lem}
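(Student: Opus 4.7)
My plan is to apply Zorn's Lemma to the poset $V(H)=\{F\in \mathrm{Spec}^{\mathrm{f}}(M)\mid F\leq H\}$, ordered by inclusion. Two conditions need to be verified: $V(H)\neq \emptyset$, and every chain in $V(H)$ has an upper bound therein.

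Non-emptiness is immediate from the essential-socle hypothesis: since $\mathrm{Soc}({}_{R}M)$ is essential in $M$, we have $H\cap \mathrm{Soc}(M)\neq 0$, so $H$ contains a simple submodule, and by Remark \ref{S(M)} every simple submodule lies in $\mathrm{Spec}^{\mathrm{f}}(M)$.

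For the chain condition, let $\{F_{\lambda}\}_{\lambda \in \Lambda}$ be a chain in $V(H)$ and set $F:=\bigcup_{\lambda}F_{\lambda}\leq _{R}H$; the delicate point is to show $F\in \mathrm{Spec}^{\mathrm{f}}(M)$. Each $\mathrm{ann}_{R}(F_{\lambda})$ is a prime ideal by Remark \ref{cop-prime}, and since $F_{\lambda}\subseteq F_{\mu}$ forces $\mathrm{ann}_{R}(F_{\mu})\subseteq \mathrm{ann}_{R}(F_{\lambda})$, the annihilators form a descending chain of primes, with $\mathrm{ann}_{R}(F)=\bigcap_{\lambda}\mathrm{ann}_{R}(F_{\lambda})$. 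Under hypothesis (1), every prime of $R$ is maximal, so any two comparable primes coincide; the chain is constant at some $P\in \mathrm{Max}(R)$, whence $\mathrm{ann}_{R}(F)=P$, and Example \ref{max-first} yields $F\in \mathrm{Spec}^{\mathrm{f}}(M)$. Under hypothesis (2), $F$ is itself a multiplication module (as a submodule of $M$), while the intersection of a descending chain of prime ideals is classically prime, so $\mathrm{ann}_{R}(F)$ is prime and Proposition \ref{c-ann}(b) shows $F$ is first.

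The main obstacle is precisely this last step: firstness is not preserved by arbitrary unions of submodules, and hypotheses (1) and (2) are tailored to repair this for ascending chains---either by collapsing the chain of prime annihilators to a single maximal ideal, or by invoking the multiplication-module characterisation of firstness through prime annihilators. Once the chain condition is settled, Zorn's Lemma produces the desired maximal first submodule under $H$.
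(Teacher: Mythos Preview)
Your proposal is correct and follows essentially the same route as the paper: verify $V(H)\neq\emptyset$ via the essential socle, take the union of a chain, show its annihilator is the intersection of a chain of primes (hence prime), and then invoke the characterisation of firstness under each hypothesis before applying Zorn's Lemma. The only cosmetic differences are that you treat arbitrary chains rather than the paper's countably-indexed ones, and for hypothesis (1) you observe the chain of maximal-ideal annihilators collapses to a single $P$ and cite Example~\ref{max-first}, whereas the paper cites Remark~\ref{zero-dim}; these are equivalent.
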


\begin{proof}
Let $0\neq H\leq _{R}M.$ Since $\mathrm{Soc}(M)\leq _{R}M$ is essential, $%
\emptyset \neq \mathcal{S}(H)\subseteq V(H).$ Let%
\begin{equation*}
F_{1}\subseteq F_{2}\subseteq \cdots \subseteq F_{n}\subseteq
F_{n+1}\subseteq \cdots
\end{equation*}%
be an ascending chain in $V(H)$ and set $\widetilde{F}:=\bigcup%
\limits_{i=1}^{\infty }F_{i}.$ Then we have a descending chain of prime
ideals
\begin{equation}
(0:_{R}F_{1})\supseteq (0:_{R}F_{2})\supseteq \cdots \supseteq
(0:_{R}F_{n})\supseteq (0:_{R}F_{n+1})\supseteq \cdots
\end{equation}%
and it follows that $\mathfrak{p}:=(0:_{R}\widetilde{F})=\bigcap%
\limits_{i=1}^{\infty }(0:_{R}F_{i})$ is a prime ideal. If $R$ is
zero-dimensional, then $\widetilde{F}\in \mathrm{Spec}^{\mathrm{f}}(M)$ by
Remark \ref{zero-dim}. On the other hand, if $_{R}\widetilde{F}$ is
multiplication, then $\widetilde{F}\in V(H)$ by Proposition \ref{c-ann} (b).
In either case, it follows by Zorn's Lemma that $V(H)$ has a maximal element.
\end{proof}

\begin{ex}
\label{cc}Recall from \cite[p. 128]{JST2012} that $_{R}M$ is \emph{%
completely cyclic} (or \emph{fully cyclic} \cite{BW2000}) iff every $R$%
-submodule of $_{R}M$ is cyclic. If $_{R}M$ is a uniserial module and $R$ is
a left (or right) Artinian left duo ring, then $_{R}M$ is completely cyclic
by \cite[Lemma 13.9]{JST2012}, whence every $R$-submodule of $_{R}M$ is
multiplication by \cite{Tug2004}; moreover, since $_{R}M$ is cyclic
(finitely generated) and $R$ is left Artinian, $_{R}M$ is also Artinian
whence $\mathrm{Soc}(M)\leq _{R}M$ is essential.
\end{ex}

\begin{punto}
Let $M$ be a \textit{top}$^{\mathrm{f}}$\textit{-module} and consider $%
\mathrm{Spec}^{\mathrm{f}}({M})$ with the associated topology. Since the
lattice $\mathcal{I}(M)$ and the lattice $\xi ^{\mathrm{f}}({M})$ of closed
subsets are isomorphic, some topological conditions on $\mathrm{Spec}^{%
\mathrm{f}}({M})$ translate to module theoretical conditions on $M$. Recall
from \cite{Bou1966, Bou1998} that a topological space is called \emph{%
Noetherian} (\emph{Artinian}) iff every descending (ascending) chain of
closed sets is stationary. Therefore, $\mathrm{Spec}^{\mathrm{f}}({M})$ is
Noetherian (Artinian) if and only if $M$ satisfies the descending
(ascending) chain condition on submodules of the form $I(\mathcal{A})$ for
subsets $\mathcal{A}\subseteq \mathrm{Spec}^{\mathrm{f}}({M})$. In
particular, if $M$ is Noetherian (Artinian), then $\mathrm{Spec}^{\mathrm{f}%
}({M})$ is Artinian (Noetherian).
\end{punto}

\begin{lem}
\label{Lemma_irreducible_enoughprimes}Let $M$ be a \textit{top}$^{\mathrm{f}%
} $\textit{-module}, $\mathcal{A}\subseteq \mathrm{Spec}^{\mathrm{f}}({M})$
an irreducible subset and $H$ a non-zero submodule of $I(\mathcal{A})$. If $%
\mathrm{Spec}^{\mathrm{f}}({H})\neq \emptyset $, then $\mathrm{ann}_{R}(H)=%
\mathrm{ann}_{R}(I(\mathcal{A})).$
\end{lem}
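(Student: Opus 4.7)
The plan is to verify the nontrivial inclusion $\mathrm{ann}_R(H) \subseteq \mathrm{ann}_R(I(\mathcal{A}))$; the reverse inclusion is automatic from $H \leq I(\mathcal{A})$. Fix $r \in \mathrm{ann}_R(H)$ and set $I := RrR$, a two-sided ideal with $IH = 0$. To exploit the irreducibility of $\mathcal{A}$, I partition $\mathcal{A} = \mathcal{A}_0 \cup \mathcal{A}_1$ with $\mathcal{A}_0 := \{P \in \mathcal{A} : IP = 0\}$ and $\mathcal{A}_1 := \mathcal{A} \setminus \mathcal{A}_0$. Trivially $\mathcal{A}_0 \subseteq V((0:_M I))$, and every $P \in \mathcal{A}_1$ lies in $\sum_{P' \in \mathcal{A}_1} P' = I(\mathcal{A}_1)$, so $\mathcal{A}_1 \subseteq V(I(\mathcal{A}_1))$. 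Hence $\mathcal{A}$ is covered by the two closed subsets $V((0:_M I))$ and $V(I(\mathcal{A}_1))$ of $\mathrm{Spec}^{\mathrm{f}}(M)$, and by irreducibility it must lie in one of them.

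If $\mathcal{A} \subseteq V((0:_M I))$, then $I \cdot I(\mathcal{A}) = 0$ and in particular $r \cdot I(\mathcal{A}) = 0$, which is exactly what we want. Otherwise $\mathcal{A} \subseteq V(I(\mathcal{A}_1))$, whence $I(\mathcal{A}) = I(\mathcal{A}_1)$ and therefore $F \leq H \leq I(\mathcal{A}_1)$. Choose any nonzero $f \in F$; by Proposition \ref{IM}, every nonzero submodule of the first module $F$ is itself first, so $Rf$ is a nonzero cyclic first submodule of $F$. Writing $f$ as a finite sum with summands $p'_j \in P'_j \in \mathcal{A}_1$ yields $Rf \leq P'_1 + \cdots + P'_k \leq \mathrm{Corad}^{\mathrm{f}}(M)$. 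Since $M$ is top$^{\mathrm{f}}$, Theorem \ref{characterisation_top} tells us $Rf$ is strongly hollow in $\mathrm{Corad}^{\mathrm{f}}(M)$, so an easy induction on $k$ delivers $Rf \leq P'_i$ for some $i$. The first property of $P'_i$ then gives $\mathrm{ann}_R(Rf) = \mathrm{ann}_R(P'_i)$, while $Rf \leq H$ forces $I \subseteq \mathrm{ann}_R(Rf)$; combining these, $IP'_i = 0$, contradicting $P'_i \in \mathcal{A}_1$. So this second case cannot occur and the argument is complete.

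The main difficulty is engineering a second closed set that, together with $V((0:_M I))$, covers $\mathcal{A}$ and whose ``$\mathcal{A} \subseteq$'' alternative can be ruled out. The hypothesis $\mathrm{Spec}^{\mathrm{f}}(H) \neq \emptyset$ is essential at precisely this point: without a first submodule $F \leq H$ to cut down via a cyclic first submodule $Rf \leq F$ and the strong hollowness supplied by the top$^{\mathrm{f}}$ assumption, the case $\mathcal{A} \subseteq V(I(\mathcal{A}_1))$ could not be excluded.
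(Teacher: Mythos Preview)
Your argument is correct, but it follows a different route from the paper's. The paper fixes a \emph{cyclic} first submodule $P\leq H$ at the outset and partitions $\mathcal{A}$ according to whether $Q\cap P=0$; it then rules out the case $\mathcal{A}\subseteq V(I(\mathcal{A}_0))$ using the distributivity of $(\mathcal{I}(M),\widetilde{\wedge},+)$ (Theorem~\ref{characterisation_top}(d)) to force $P=(P\cap Q_1)+\cdots+(P\cap Q_n)=0$, and in the surviving case computes directly
\[
\mathrm{ann}_R(I(\mathcal{A}))=\bigcap_{Q\cap P\neq 0}\mathrm{ann}_R(Q)=\bigcap_{Q\cap P\neq 0}\mathrm{ann}_R(Q\cap P)=\mathrm{ann}_R(P),
\]
using that both $P$ and $Q$ are first. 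You instead work elementwise on the annihilator: fixing $r\in\mathrm{ann}_R(H)$ and $I=RrR$, you partition $\mathcal{A}$ by whether $IQ=0$, obtain the conclusion immediately in the case $\mathcal{A}\subseteq V((0:_M I))$, and derive a contradiction in the other case via strong hollowness (Theorem~\ref{characterisation_top}(c)) of a cyclic first $Rf$ inside $H$. Both proofs rest on the same characterisation of top$^{\mathrm{f}}$-modules and both need a cyclic first submodule of $H$ to pass to finitely many $Q$'s; they simply invoke Theorem~\ref{characterisation_top} through different equivalent conditions and reverse which alternative is ``good'' and which is ``contradictory''. Your version is arguably more direct for the specific inclusion $\mathrm{ann}_R(H)\subseteq\mathrm{ann}_R(I(\mathcal{A}))$; the paper's yields the sharper intermediate identity $\mathrm{ann}_R(P)=\mathrm{ann}_R(I(\mathcal{A}))$ in one stroke. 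One small expository slip: you invoke $F$ before introducing it --- say explicitly, at the start of the second case, that $F\in\mathrm{Spec}^{\mathrm{f}}(H)$ is chosen by hypothesis.
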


\begin{proof}
Let $P\in \mathrm{Spec}^{\mathrm{f}}({H})$ be a cyclic first submodule.
Setting
\begin{equation*}
\mathcal{A}_{0}=\{Q\in \mathcal{A}\mid Q\cap P=0\},
\end{equation*}%
we have%
\begin{equation*}
\mathcal{A}\subseteq V(I(\mathcal{A}_{0}))\cup V(I(\mathcal{A}\setminus
\mathcal{A}_{0})).
\end{equation*}%
By the irreducibility of $\mathcal{A}$ we have that $\mathcal{A}$ is
contained in one of the two closed sets. Suppose that $\mathcal{A}\subseteq
V(I(\mathcal{A}_{0}))$, whence $P\subseteq I(\mathcal{A}_{0}).$ As $P$ is
cyclic, there is a finite set $\{Q_{1},\cdots ,Q_{n}\}\subseteq \mathcal{A}%
_{0}$ with $P\subseteq Q_{1}+\cdots +Q_{n}$. Since $M$ is a top$^{\mathrm{f}%
} $-module, the lattice of submodules of the form $I(\mathcal{A})$ is
distributive (by Theorem \ref{characterisation_top}). Hence%
\begin{equation*}
P=P\cap (Q_{1}+\cdots +Q_{n})=P\cap Q_{1}+\cdots +P\cap Q_{n}=0,
\end{equation*}%
since $Q_{i}\in \mathcal{A}_{0}$ for all $i=1,\cdots ,n$. This is a
contradiction to $P$ being non-zero. Hence, $\mathcal{A}\subseteq V(I(%
\mathcal{A}\setminus \mathcal{A}_{0}))$ and $P\subseteq I(\mathcal{A})=\sum
\{Q\in \mathcal{A}\mid Q\cap P\neq 0\}.$ This shows that
\begin{equation*}
\mathrm{ann}_{R}(P)\supseteq \mathrm{ann}_{R}(I(\mathcal{A}))=\bigcap_{Q\cap
P\neq 0}\mathrm{ann}_{R}(Q)=\bigcap_{Q\cap P\neq 0}\mathrm{ann}_{R}(Q\cap P)=%
\mathrm{ann}_{R}(P).
\end{equation*}%
Thus $\mathrm{ann}_{R}(P)=\mathrm{ann}_{R}(H)=\mathrm{ann}_{R}(I(\mathcal{A}%
) $.
\end{proof}

\begin{rem}
\label{remark_after_lemma}Note that if $I(\mathcal{A})$ is a distributive
module for a non-empty subset $\mathcal{A}$, then $\mathrm{Spec}^{\mathrm{f}%
}({H})=\emptyset $ if and only if $H=0$ for all submodules $H\in I(\mathcal{A%
})$, because if $H$ is non-zero and $C$ is a non-zero cyclic submodule of $H$%
, then $C\subseteq I(\mathcal{A})$ implies that there are finitely many
first submodules $Q_{1},\ldots ,Q_{n}$ such that $C\subseteq Q_{1}+\cdots
+Q_{n}$. By distributivity, $C=C\cap Q_{1}+\cdots +C\cap Q_{n}$ and since $%
C\neq 0$, there must be some $i=1,\cdots ,n$ such that $C\cap Q_{i}\neq 0$.
Thus $C\cap Q_{i}\in \mathrm{Spec}^{\mathrm{f}}({H})$.
\end{rem}

\begin{prop}
\label{cor_topm}Let $M$ be a top$^{\mathrm{f}}$-module and let $\emptyset
\neq \mathcal{A}\subseteq \mathrm{Spec}^{\mathrm{f}}({M}).$

\begin{enumerate}
\item[(1)] If $I(\mathcal{A})$ is a hollow module, then $\mathcal{A}$ is
irreducible. The converse holds if $M$ is a strongly top$^{\mathrm{f}}$%
-module.

\item[(2)] The following are equivalent:

\begin{enumerate}
\item[(a)] $\mathcal{A}$ is irreducible and $\mathrm{Spec}^{\mathrm{f}}({H}%
)\neq \emptyset $ for any $0\neq H\subseteq I(\mathcal{A})$.

\item[(b)] $\mathcal{A}$ is irreducible and $I(\mathcal{A})$ is distributive.

\item[(c)] $I(\mathcal{A})$ is a first submodule;

\item[(d)] $I(\mathcal{A})$ is uniserial;

\item[(e)] $\mathcal{A}$ is a chain.
\end{enumerate}
\end{enumerate}
\end{prop}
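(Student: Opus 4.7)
The strategy is to reduce Part (1) to Proposition \ref{irreducible_subsets} via the dictionary given by the dual lattice, and to establish Part (2) by running the cycle (c)$\Rightarrow$(d)$\Rightarrow$(e)$\Rightarrow$(a)$\Rightarrow$(c) together with a separate (a)$\Leftrightarrow$(b). The key observation underlying Part (1) is that in $\mathcal{L}(M)^\circ$ the meet is $+$, so ``hollow'' and ``strongly hollow'' translate exactly to ``irreducible'' and ``strongly irreducible'' in $(\mathcal{L}(M)^\circ,+)$. Since the meet on $\mathcal{I}(M)=\mathcal{C}(\mathcal{L}(M)^\circ)$ is inherited from $\mathcal{L}(M)^\circ$, irreducibility of $I(\mathcal{A})$ in the big lattice restricts to irreducibility in the sublattice $(\mathcal{I}(M),+)$, and Proposition \ref{irreducible_subsets}(1) then delivers irreducibility of $\mathcal{A}$. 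For the converse, the strongly top$^{\mathrm{f}}$ hypothesis is precisely the assumption of Proposition \ref{irreducible_subsets}(2); the equivalence (b)$\Leftrightarrow$(c) there forces $I(\mathcal{A})$ to be strongly hollow, a fortiori hollow.

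For Part (2), the easy half of the cycle runs as follows. The implication (c)$\Rightarrow$(d) is immediate from Theorem \ref{characterisation_top}(d), which in a top$^{\mathrm{f}}$-module makes every first submodule uniserial; then (d)$\Rightarrow$(e) is trivial since $\mathcal{A}$ is a family of submodules of the uniserial module $I(\mathcal{A})$. For (e)$\Rightarrow$(a), chains are directed so $I(\mathcal{A})=\bigcup\mathcal{A}$; thus for any non-zero $H\leq I(\mathcal{A})$ we can pick $0\neq x\in H$ with $x\in P$ for some $P\in\mathcal{A}$, and $Rx\leq P$ is a non-zero submodule of a first module, hence itself first by Proposition \ref{IM}(5), giving an element of $\mathrm{Spec}^{\mathrm{f}}(H)$. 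Irreducibility of a chain $\mathcal{A}$ is elementary: if $\mathcal{A}\subseteq V(a_1)\cup V(a_2)$ but $\mathcal{A}$ were contained in neither $V(a_i)$, pick $P_i\in\mathcal{A}$ with $P_i\subseteq a_i$ and $P_i\not\subseteq a_{3-i}$; comparing $P_1$ and $P_2$ in the chain forces one of the non-inclusions to fail.

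The hinge of the whole cycle is (a)$\Rightarrow$(c), and this is exactly what Lemma \ref{Lemma_irreducible_enoughprimes} is designed for: the hypothesis of (a) feeds the lemma directly and yields $\mathrm{ann}_R(H)=\mathrm{ann}_R(I(\mathcal{A}))$ for every non-zero submodule $H$ of $I(\mathcal{A})$, which by the characterization in Proposition \ref{IM}(2) says precisely that $I(\mathcal{A})$ is a first submodule. Finally, (a)$\Rightarrow$(b) follows from the already established (a)$\Rightarrow$(c)$\Rightarrow$(d), since uniserial modules have totally ordered and hence distributive submodule lattices; the reverse (b)$\Rightarrow$(a) is exactly the content of Remark \ref{remark_after_lemma}. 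The only step that is not essentially a bookkeeping matter is (a)$\Rightarrow$(c); with Lemma \ref{Lemma_irreducible_enoughprimes} in hand there is no real obstacle, but without it one would need to argue directly that the hypothesis forces every cyclic non-zero submodule of $I(\mathcal{A})$ to meet every element of $\mathcal{A}$, which is precisely the distributivity/irreducibility manoeuvre carried out in the proof of that lemma.
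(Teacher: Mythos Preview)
Your proof is correct and follows essentially the same route as the paper. Part (1) is handled identically via Proposition \ref{irreducible_subsets}; for Part (2) the paper organizes the implications slightly differently (it proves $(e)\Rightarrow(c)$ directly by observing that all members of a chain of prime modules share the same annihilator, and uses Corollary \ref{x->irred} for $(c)\Rightarrow(a)$, rather than passing through $(e)\Rightarrow(a)$ via your elementary chain-irreducibility argument), but the key ingredients --- Lemma \ref{Lemma_irreducible_enoughprimes} for $(a)\Rightarrow(c)$, Remark \ref{remark_after_lemma} for $(b)\Rightarrow(a)$, and the uniserial conclusion of Theorem \ref{characterisation_top}/Corollary \ref{uniserial_xtop} --- are the same.
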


\begin{proof}
(1) follows from Proposition \ref{irreducible_subsets} applied to the dual
lattice $\mathcal{L}(M)^{\circ }.$

(2) $(a)\Rightarrow (c)$. The hypotheses of Lemma \ref%
{Lemma_irreducible_enoughprimes} are fulfilled for any non-zero submodule of
$I(\mathcal{A})$. Hence, all non-zero submodules have the same annihilator,
which shows that $I(\mathcal{A})$ is a prime module.

$(c)\Rightarrow (a)$ By Corollary \ref{x->irred}, $\mathcal{A}$ is
irreducible. Clearly any non-zero submodule of a prime module is first; so,
if $I(\mathcal{A})$ is a first submodule, then any non-zero submodule of it
is first as well.

$(c)\Rightarrow (e)$ follows by Corollary \ref{uniserial_xtop}.

$(e)\Rightarrow (c)$ Assume now that $\mathcal{A}$ is a chain; in
particular, $I(\mathcal{A})=\bigcup_{P\in \mathcal{A}}P$. Since for all $%
Q,P\in \mathcal{A}$ either $Q\subseteq P$ or $P\subseteq Q$ and since $P$
and $Q$ are prime modules, $\mathrm{ann}_{{R}}({P})=\mathrm{ann}_{{R}}({Q})$%
. Every cyclic submodule $U=Rm$ of $I(\mathcal{A})$ lies in one of the
members of $\mathcal{A}$ and thus has the same annihilator, \emph{i.e.} $I(%
\mathcal{A})$ is a prime module or equivalently $I(\mathcal{A})\in \mathrm{%
Spec}^{\mathrm{f}}({M})$.

$(d)\Longleftrightarrow (e)$ clear.

$(a+d)\Rightarrow (b)$ is clear because a uniserial module is distributive.

$(b)\Rightarrow (a)$ holds by Remark \ref{remark_after_lemma}.
\end{proof}

\begin{rem}
\label{rem-semisimple}Let $M$ be a top$^{\mathrm{f}}$-module and $\emptyset
\neq \mathcal{A}\subseteq \mathcal{S}(M).$ Every non-zero submodule of $I(%
\mathcal{A})\subseteq \mathrm{Soc}(M)$ contains a simple (hence first)
submodule and so we get as an immediate consequence from Proposition \ref%
{cor_topm} that the following statements are equivalent:

\begin{enumerate}
\item[(a)] $\mathcal{A}$ is irreducible;

\item[(b)] $I(\mathcal{A})$ is a first submodule of $M;$

\item[(c)] $\mathcal{A}=\{K\}$ as singleton.
\end{enumerate}
\end{rem}

\begin{ex}
Let $M$ be a top$^{\mathrm{f}}$-module. It follows by Remark \ref%
{rem-semisimple} that $\mathcal{S}(M)\subseteq \mathrm{Spec}^{\mathrm{f}}(M)$
is irreducible if and only if $\mathrm{Soc}(M)$ is a first submodule of $M$
if and only if $M$ contains a single simple $R$-submodule.
\end{ex}

\begin{rem}
\label{f->irred}Let $M$ be a top$^{\mathrm{f}}$-module and $\mathcal{A}%
\subseteq \mathrm{Spec}^{\mathrm{f}}({M})$ be such that $I(\mathcal{A})$ is
a first submodule of $M.$ By Theorem \ref{characterisation_top}, $I(\mathcal{%
A})$ is a hollow module (in fact $I(\mathcal{A})$ is moreover a uniserial
module). It follows then from Proposition \ref{cor_topm} (2) that $\mathcal{A%
}$ is irreducible.
\end{rem}

\begin{defn}
We say a top$^{\mathrm{f}}$-module is \emph{consistent} iff for every $%
\mathcal{A}\subseteq \mathrm{Spec}^{\mathrm{f}}(M)$ we have: $I(\mathcal{A}%
)\in \mathrm{Spec}^{\mathrm{f}}(M)$ if (and only if) $\mathcal{A}$ is
irreducible.
\end{defn}

\begin{rem}
From Proposition \ref{cor_topm} and Remark \ref{remark_after_lemma} we see
that the following statements are equivalent for a top$^{\mathrm{f}}$-module
$M$:

\begin{enumerate}
\item[(a)] $M$ is a consistent;

\item[(b)] $\mathrm{Spec}^{\mathrm{f}}(H)\neq \emptyset $ for every non-zero
submodule $H\subseteq I(\mathcal{A})$ and every irreducible subset $\mathcal{%
A}\subseteq \mathrm{Spec}^{\mathrm{f}}(M)$;

\item[(c)] $I(\mathcal{A})$ is distributive for every irreducible subset $%
\mathcal{A}\subseteq \mathrm{Spec}^{\mathrm{f}}(M)$.
\end{enumerate}

For property (c) we use the obvious fact that uniserial modules are
distributive.
\end{rem}

\begin{ex}
Every top$^{\mathrm{f}}$-module with essential socle is consistent.
Moreover, every top$^{\mathrm{f}}$-module $M,$ for which $\mathrm{Corad}^{%
\mathrm{f}}(M)$ is distributive, is consistent.
\end{ex}

\begin{prop}
\label{duo-irr}Let $_{R}M$ be a consistent top$^{\mathrm{f}}$-module with $%
\mathrm{Spec}^{\mathrm{f}}(M)\neq \emptyset $. The following are equivalent
for ${\mathcal{A}}\subseteq \mathrm{Spec}^{\mathrm{f}}(M):$

\begin{enumerate}
\item[(a)] ${\mathcal{A}}$ is irreducible;

\item[(b)] $I({\mathcal{A}})$ is a first submodule of $M;$

\item[(c)] $0\neq I({\mathcal{A}})$ is a hollow module;

\item[(d)] $0\neq I(\mathcal{A})$ is uniserial;

\item[(e)] $\emptyset \neq \mathcal{A}$ is a chain.
\end{enumerate}
\end{prop}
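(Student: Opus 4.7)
The plan is to chain the implications $(a) \Rightarrow (b) \Rightarrow (c) \Rightarrow (a)$, together with the block of equivalences $(b) \Leftrightarrow (d) \Leftrightarrow (e)$ harvested almost verbatim from Proposition~\ref{cor_topm}. Since the module is assumed \emph{consistent}, the only new ingredient needed over what is already in Section~4 is the passage from an irreducible set to a first submodule; everything else follows from what has already been established for top$^{\mathrm{f}}$-modules.

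First I would dispatch $(b) \Leftrightarrow (d) \Leftrightarrow (e)$ by quoting Proposition~\ref{cor_topm}(2): conditions $(c),(d),(e)$ there are \emph{verbatim} our $(b),(d),(e)$ and were shown equivalent in any top$^{\mathrm{f}}$-module (through the cycle $(c) \Rightarrow (e) \Rightarrow (c)$ and $(d) \Leftrightarrow (e)$), with no need for the full strength of condition $(a)$ of that proposition. Next, $(b) \Rightarrow (c)$ is immediate from Theorem~\ref{characterisation_top}(d): every first submodule of a top$^{\mathrm{f}}$-module is uniserial, hence in particular hollow; and $I(\mathcal{A})$ being first guarantees $I(\mathcal{A}) \neq 0$. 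Then $(c) \Rightarrow (a)$ comes from Proposition~\ref{cor_topm}(1), whose first half (``$I(\mathcal{A})$ hollow implies $\mathcal{A}$ irreducible'') requires only the top$^{\mathrm{f}}$-hypothesis, not strong-top. Along the way I would note that $I(\mathcal{A}) \neq 0$ forces $\mathcal{A} \neq \emptyset$, so the Bourbaki notion of irreducibility (which demands non-emptiness) is applicable.

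The only step where consistency intervenes is $(a) \Rightarrow (b)$, and this is exactly the content of the defining property of a consistent top$^{\mathrm{f}}$-module: whenever $\mathcal{A} \subseteq \mathrm{Spec}^{\mathrm{f}}(M)$ is irreducible, $I(\mathcal{A})$ lies in $\mathrm{Spec}^{\mathrm{f}}(M)$, i.e.\ is a first submodule. Irreducibility of $\mathcal{A}$ entails $\mathcal{A}\neq\emptyset$, hence $I(\mathcal{A})$ contains some $P \in \mathrm{Spec}^{\mathrm{f}}(M)$ and is in particular non-zero; the hypothesis $\mathrm{Spec}^{\mathrm{f}}(M) \neq \emptyset$ is only used to make this consideration non-vacuous.

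I expect no real obstacle: the whole argument is a bookkeeping exercise that recycles Proposition~\ref{cor_topm} and Theorem~\ref{characterisation_top}, with the consistency axiom doing exactly the one job it was designed for. The subtlest point to watch is the non-emptiness/non-zero book-keeping in $(a) \Rightarrow (b)$ and $(c) \Rightarrow (a)$, so that the statements ``$\emptyset \neq \mathcal{A}$'' in $(e)$ and ``$0 \neq I(\mathcal{A})$'' in $(c),(d)$ are consistently maintained throughout the cycle.
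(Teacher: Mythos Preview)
Your proposal is correct and matches the paper's intended approach: the paper gives no explicit proof for this proposition, treating it as an immediate consequence of Proposition~\ref{cor_topm} together with the definition of a consistent top$^{\mathrm{f}}$-module, and you have accurately spelled out which implications come from which earlier result. The non-emptiness/non-zero bookkeeping you flag is indeed the only place one must be careful, and you handle it correctly.
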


\begin{theorem}
\label{corad-s}Let $_{R}M$ be a consistent top$^{\mathrm{f}}$-module with $%
\mathrm{Spec}^{\mathrm{f}}(M)\neq \emptyset $. The following are equivalent:

\begin{enumerate}
\item[(a)] $\mathrm{Spec}^{\mathrm{f}}(M)$ is irreducible;

\item[(b)] $\mathrm{Corad}^{\mathrm{f}}(M)$ is a first submodule of $M;$

\item[(c)] $0\neq \mathrm{Corad}^{\mathrm{f}}(M)$ is hollow \emph{(}uniserial%
\emph{)};

\item[(d)] $\mathrm{Spec}^{\mathrm{f}}(M)$ is a chain.
\end{enumerate}
\end{theorem}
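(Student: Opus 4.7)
The plan is to obtain Theorem \ref{corad-s} as a direct specialization of Proposition \ref{duo-irr}, applied to the canonical choice $\mathcal{A} := \mathrm{Spec}^{\mathrm{f}}(M)$. The key observation is that, by the very definition of the coradical,
\[
I(\mathcal{A}) \;=\; I(\mathrm{Spec}^{\mathrm{f}}(M)) \;=\; \sum_{P \in \mathrm{Spec}^{\mathrm{f}}(M)} P \;=\; \mathrm{Corad}^{\mathrm{f}}(M).
\]
The hypothesis $\mathrm{Spec}^{\mathrm{f}}(M) \neq \emptyset$ ensures that this $\mathcal{A}$ is non-empty, and of course $\mathcal{A} \subseteq \mathrm{Spec}^{\mathrm{f}}(M)$, so all hypotheses of Proposition \ref{duo-irr} are satisfied.

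Under this dictionary I would match the conditions as follows: condition (a) of Proposition \ref{duo-irr} coincides with our (a); its (b) becomes our (b); its conditions (c) and (d) --- namely ``$0\neq I(\mathcal{A})$ is hollow'' and ``$0\neq I(\mathcal{A})$ is uniserial'' --- merge, after substituting $I(\mathcal{A}) = \mathrm{Corad}^{\mathrm{f}}(M)$, into the compound condition (c) here; and its (e) becomes our (d). The equivalences of Proposition \ref{duo-irr} then give precisely the four equivalences of the theorem.

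The one small thing worth checking is that packing ``hollow'' and ``uniserial'' into the single statement (c) is legitimate; this is exactly the content of the equivalence $(c)\Leftrightarrow(d)$ inside Proposition \ref{duo-irr}, which itself rests on the general fact from Theorem \ref{characterisation_top} that every first submodule of a top$^{\mathrm{f}}$-module is uniserial. I therefore do not expect any genuine obstacle: the theorem is essentially Proposition \ref{duo-irr} read off at the distinguished subset $\mathcal{A} = \mathrm{Spec}^{\mathrm{f}}(M)$, combined with the naming convention $\mathrm{Corad}^{\mathrm{f}}(M) = I(\mathrm{Spec}^{\mathrm{f}}(M))$.
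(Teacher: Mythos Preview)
Your approach is correct and is exactly how the paper proceeds: Theorem \ref{corad-s} is stated immediately after Proposition \ref{duo-irr} without a separate proof, so it is meant to be read as the specialization $\mathcal{A}=\mathrm{Spec}^{\mathrm{f}}(M)$, using $I(\mathrm{Spec}^{\mathrm{f}}(M))=\mathrm{Corad}^{\mathrm{f}}(M)$. Your dictionary between the conditions, and your remark that the merging of ``hollow'' and ``uniserial'' into a single item is justified by the equivalence $(c)\Leftrightarrow(d)$ of Proposition \ref{duo-irr}, are both accurate.
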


\begin{notation}
Set%
\begin{equation}
\mathrm{Max}(\mathrm{Spec}^{\mathrm{f}}(M)):=\{K\in \mathrm{Spec}^{\mathrm{f}%
}(M)\mid \text{ }K\text{ is a maximal first submodule of }M\}.
\end{equation}
\end{notation}

\begin{prop}
\label{max-irr}Let $_{R}M$ be a consistent top$^{\mathrm{f}}$-module.

\begin{enumerate}
\item[(a)] We have a bijection%
\begin{equation}
\mathrm{Spec}^{\mathrm{f}}(M)\overset{V(-)}{\longleftrightarrow }\{{\mathcal{%
A}}\mid {\mathcal{A}}\subseteq \mathrm{Spec}^{\mathrm{f}}(M)\text{ is an
irreducible closed subset}\}.  \label{s-irr-closed}
\end{equation}

\item[(b)] The bijection \emph{(\ref{s-irr-closed})} restricts to a
bijection
\begin{equation*}
\mathrm{Max}(\mathrm{Spec}^{\mathrm{f}}(M))\overset{V(-)}{%
\longleftrightarrow }\{{\mathcal{A}}\mid {\mathcal{A}}\subseteq \mathrm{Spec}%
^{\mathrm{f}}(M)\text{ is an irreducible component}\}.
\end{equation*}
\end{enumerate}
\end{prop}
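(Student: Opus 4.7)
The plan is to establish both parts simultaneously by showing that $V(-)$ and $I(-)$ are mutually inverse order-isomorphisms between $(\mathrm{Spec}^{\mathrm{f}}(M), \subseteq)$ and the poset of irreducible closed subsets of $\mathrm{Spec}^{\mathrm{f}}(M)$ ordered by inclusion. Since an irreducible component is by definition a maximal irreducible closed subset, part (b) will then follow immediately from part (a) once this order-isomorphism is in hand.

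For part (a), I would first check that for every $F \in \mathrm{Spec}^{\mathrm{f}}(M)$, the set $V(F) = \{P \in \mathrm{Spec}^{\mathrm{f}}(M) \mid P \subseteq F\}$ is an irreducible closed subset. Closedness is immediate from the definition of $\xi^{\mathrm{f}}(M)$. For irreducibility, the key observation is that $F \in V(F)$, so $I(V(F)) = \sum_{P \in V(F)} P = F$ (every summand is contained in $F$, and $F$ is itself one of them). Since $I(V(F)) = F$ lies in $\mathrm{Spec}^{\mathrm{f}}(M)$, Remark~\ref{f->irred} (equivalently, Corollary~\ref{x->irred} applied to $\mathcal{L}(M)^\circ$) forces $V(F)$ to be irreducible. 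In the reverse direction, given an irreducible closed subset $\mathcal{A} \subseteq \mathrm{Spec}^{\mathrm{f}}(M)$, consistency of $M$ yields $I(\mathcal{A}) \in \mathrm{Spec}^{\mathrm{f}}(M)$, and closedness of $\mathcal{A}$ combined with the Galois identity $V \circ I \circ V = V$ from~(\ref{eq_VIV}) gives $V(I(\mathcal{A})) = \mathcal{A}$. These two assignments are then visibly mutually inverse.

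For part (b), I would observe that the bijection of (a) is an order-isomorphism when both sides are ordered by inclusion. Indeed, $V$ is order-preserving by construction, and the identity $I(V(F)) = F$ for first submodules shows conversely that $V(F_1) \subseteq V(F_2)$ implies $F_1 = I(V(F_1)) \subseteq I(V(F_2)) = F_2$. Consequently, maximal elements on one side correspond to maximal elements on the other: $F$ is maximal in $\mathrm{Spec}^{\mathrm{f}}(M)$ precisely when $V(F)$ is a maximal irreducible closed subset of $\mathrm{Spec}^{\mathrm{f}}(M)$, that is, an irreducible component of this space.

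I do not anticipate a serious technical obstacle; the argument is essentially bookkeeping built on the definition of consistency and the Galois-closure identities of Section~2. The one point requiring care is to apply consistency in the correct direction (to produce a first submodule from an irreducible closed subset, rather than the opposite implication which comes from Remark~\ref{f->irred}), and to exploit the fact that $F \in V(F)$ so that $I(V(F))$ recovers $F$ exactly, rather than merely a submodule containing $F$.
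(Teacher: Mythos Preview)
Your proposal is correct and follows essentially the same approach as the paper: both compute $I(V(F))=F$ to see that $V(F)$ is irreducible closed, invoke consistency (the paper via Proposition~\ref{duo-irr}) to pass from an irreducible closed $\mathcal{A}$ to $I(\mathcal{A})\in\mathrm{Spec}^{\mathrm{f}}(M)$ with $V(I(\mathcal{A}))=\mathcal{A}$, and then deduce (b) from the fact that $V$ is order-preserving. Your write-up is in fact a bit more explicit than the paper's for part~(b), spelling out the order-isomorphism rather than leaving it to ``the definitions''.
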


\begin{proof}
(a) Let $K\in \mathrm{Spec}^{\mathrm{f}}(M).$ Notice that $K=I(V(K))$ and so
the closed set $V(K)=\{K\}$ is irreducible (see Proposition \ref{duo-irr}).
On the other hand, let ${\mathcal{A}}\subseteq \mathrm{Spec}^{\mathrm{f}}(M)$
be a closed irreducible subset. Notice that $I({\mathcal{A}})$ is first in $%
M $ by Proposition \ref{duo-irr} and that ${\mathcal{A}}=\overline{{\mathcal{%
A}}}=V(I({\mathcal{A}})).$ Clearly, the maps $V$ and $I$ are bijective and
the result follows.

(b) This follows from (a), the definitions and the fact that $V$ is order
preserving.
\end{proof}

\begin{cor}
If $_{R}M$ is a consistent top$^{\mathrm{f}}$-module, then $\mathrm{Spec}^{%
\mathrm{f}}(M)$ is a sober space.
\end{cor}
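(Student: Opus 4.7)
Recall that a topological space is \emph{sober} iff every non-empty irreducible closed subset is the closure of a unique point. So two things must be verified: (i) every irreducible closed subset of $\mathrm{Spec}^{\mathrm{f}}(M)$ is of the form $\overline{\{K\}}$ for some $K\in \mathrm{Spec}^{\mathrm{f}}(M)$, and (ii) the point $K$ is uniquely determined.

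For existence, the plan is to invoke the bijection established in Proposition \ref{max-irr}(a). Let $\mathcal{A}\subseteq \mathrm{Spec}^{\mathrm{f}}(M)$ be a non-empty irreducible closed subset. By that proposition, $\mathcal{A}=V(K)$ for some $K\in \mathrm{Spec}^{\mathrm{f}}(M)$, namely $K=I(\mathcal{A})$, which is indeed a first submodule by consistency together with Proposition \ref{duo-irr}. On the other hand, the closure formula in Remark \ref{simple-char}(c) gives $\overline{\{K\}}=V(I(\{K\}))=V(K)$, so $\mathcal{A}=\overline{\{K\}}$ as required.

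For uniqueness, the key input is the $T_0$-property of $\mathrm{Spec}^{\mathrm{f}}(M)$ recorded in Remark \ref{simple-char}(a): if $\overline{\{K_1\}}=\overline{\{K_2\}}$ for $K_1,K_2\in \mathrm{Spec}^{\mathrm{f}}(M)$, then no open set separates $K_1$ from $K_2$, forcing $K_1=K_2$. Alternatively, one can argue directly that $\overline{\{K_1\}}=V(K_1)$ and $\overline{\{K_2\}}=V(K_2)$, so the equality of closures together with the bijectivity of $V$ on $\mathrm{Spec}^{\mathrm{f}}(M)$ (which follows from $K=I(V(K))$ for every $K\in \mathrm{Spec}^{\mathrm{f}}(M)$, as already noted in the proof of Proposition \ref{max-irr}(a)) yields $K_1=K_2$. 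Since the proof is essentially a direct compilation of the bijection in Proposition \ref{max-irr}(a), the closure formula in Remark \ref{simple-char}(c), and the $T_0$-property, there is no genuine obstacle; the whole content sits in the previously established machinery.
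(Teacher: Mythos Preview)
Your proof is correct and follows essentially the same route as the paper: invoke the bijection of Proposition~\ref{max-irr}(a) to write an irreducible closed set as $V(K)$ with $K=I(\mathcal{A})\in\mathrm{Spec}^{\mathrm{f}}(M)$, identify $V(K)$ with $\overline{\{K\}}$, and deduce uniqueness from the injectivity of $V$ on $\mathrm{Spec}^{\mathrm{f}}(M)$. Your additional remark that uniqueness also follows from the $T_{0}$-property is a harmless elaboration of the same idea.
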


\begin{proof}
Let ${\mathcal{A}}\subseteq \mathrm{Spec}^{\mathrm{f}}(M)$ be an irreducible
closed subset. By Proposition \ref{max-irr} (1), ${\mathcal{A}}=V(K)$ for
some $K\in \mathrm{Spec}^{\mathrm{f}}(M)$. It follows that
\begin{equation*}
{\mathcal{A}}=\overline{{\mathcal{A}}}=V(I({\mathcal{A}}))=V(K)=\overline{%
\{K\}},
\end{equation*}%
\emph{i.e.} $K$ is a generic point for ${\mathcal{A}}$. If $H$ is a generic
point of ${\mathcal{A}}$, then $V(K)=V(H)$ whence $K=H.$
\end{proof}

\begin{theorem}
\label{compact}Let $_{R}M$ be a top$^{\mathrm{f}}$-module with essential
socle.

\begin{enumerate}
\item[(a)] If $\mathcal{S}(M)$ is finite, then $\mathrm{Spec}^{\mathrm{f}%
}(M) $ is compact.

\item[(b)] If $\mathcal{S}(M)$ is countable, then $\mathrm{Spec}^{\mathrm{f}%
}(M)$ is countably compact.
\end{enumerate}
\end{theorem}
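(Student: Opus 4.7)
The plan is to translate the compactness question into an algebraic statement about intersections of submodules, using two key consequences of the hypotheses. First, since $\mathrm{Soc}(M)$ is essential and every simple submodule is first (Remark \ref{S(M)}), one obtains the equivalence $V(L) = \emptyset$ if and only if $L = 0$: a non-zero $L$ meets $\mathrm{Soc}(M)$ non-trivially and so contains a simple submodule, which lies in $V(L)$. Second, since $M$ is top$^{\mathrm{f}}$, by Corollary \ref{simple-0} and Lemma \ref{Steph} we have the direct-sum decomposition $\mathrm{Soc}(M) = \bigoplus_{S \in \mathcal{S}(M)} S$ with pairwise non-isomorphic summands, and every submodule of $\mathrm{Soc}(M)$ equals $\bigoplus_{S \in \mathcal{B}} S$ for some $\mathcal{B} \subseteq \mathcal{S}(M)$.

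First I would observe that every open cover of $\mathrm{Spec}^{\mathrm{f}}(M)$ has the form $\{\mathcal{X}(L_\lambda)\}_{\lambda \in \Lambda}$ with $L_\lambda \leq_R M$, and the cover condition is $\emptyset = \bigcap_\lambda V(L_\lambda) = V\!\left( \bigcap_\lambda L_\lambda \right)$, equivalent to $\bigcap_\lambda L_\lambda = 0$ by the equivalence above. I would then construct a subcover indexed by $\mathcal{S}(M)$: for each $S \in \mathcal{S}(M)$, since $S \neq 0$ and $\bigcap_\lambda L_\lambda = 0$, there exists $\lambda_S \in \Lambda$ with $S \not\subseteq L_{\lambda_S}$. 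Setting $N := \bigcap_{S \in \mathcal{S}(M)} L_{\lambda_S}$, the structure of submodules of $\mathrm{Soc}(M)$ gives $N \cap \mathrm{Soc}(M) = \bigoplus_{S \in \mathcal{B}} S$ where $\mathcal{B} = \{S \in \mathcal{S}(M) \mid S \subseteq N\}$. Any $S \in \mathcal{B}$ would force $S \subseteq N \subseteq L_{\lambda_S}$, contradicting the choice of $\lambda_S$; thus $\mathcal{B} = \emptyset$, $N \cap \mathrm{Soc}(M) = 0$, and essentiality of the socle yields $N = 0$. Hence $\{\mathcal{X}(L_{\lambda_S})\}_{S \in \mathcal{S}(M)}$ is a subcover of cardinality $|\mathcal{S}(M)|$.

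Part (a) follows immediately, since finiteness of $\mathcal{S}(M)$ yields a finite subcover. For part (b), the same construction produces a countable subcover when $\mathcal{S}(M)$ is countable, which in the sense used in the paper gives countable compactness. The main obstacle is justifying the clean interchange between intersections and direct-sum decompositions of $\mathrm{Soc}(M)$: this is exactly where the top$^{\mathrm{f}}$ hypothesis enters via Lemma \ref{Steph}, since without it isomorphic simple submodules could occur and spoil the unique direct-sum description of each submodule of the socle, breaking the selection argument that produces the subcover.
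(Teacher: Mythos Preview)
Your argument is correct and follows the paper's strategy: for each simple submodule pick a closed set excluding it, and use essentiality of the socle to force the resulting intersection to be zero. The paper's version is slightly leaner, though: rather than invoking Corollary \ref{simple-0} and Lemma \ref{Steph} to obtain the direct-sum structure of $\mathrm{Soc}(M)$, it simply observes that any simple submodule of $\widetilde{H}:=\bigcap_{i} H_{\alpha_i}$ must already be one of the $N_i\in\mathcal{S}(M)$, which is immediately contradicted by the choice $N_i\not\subseteq H_{\alpha_i}$. So the distributivity of the socle is not actually needed here, and your closing remark that ``this is exactly where the top$^{\mathrm{f}}$ hypothesis enters via Lemma \ref{Steph}'' is slightly off: in the paper's proof, the top$^{\mathrm{f}}$ hypothesis is used only to guarantee that the closed sets are precisely the sets $V(L)$, not for any structural fact about $\mathrm{Soc}(M)$.
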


\begin{proof}
We prove only (a); the proof of (b)\ is similar. Assume that $\mathcal{S}%
(M)=\{N_{1},\cdots ,N_{k}\}.$ Let $\{V(H_{\alpha })\}_{\alpha \in I}$ be an
arbitrary collections of closed subsets of $\mathrm{Spec}^{\mathrm{f}}(M)$
with $\dbigcap\limits_{\alpha \in I}V(H_{\alpha })=\emptyset $. Since $%
\mathcal{S}(M)\subseteq \mathrm{Spec}^{\mathrm{f}}(M),$ we can pick for each
$i=1,\cdots ,k$ some $\alpha _{i}\in I$ such that $N_{i}\nsubseteqq
H_{\alpha _{i}}.$ If $\widetilde{H}:=\dbigcap\limits_{i=1}^{k}H_{\alpha
_{i}}\neq 0,$ then there exists a simple $R$-submodule $0\neq N\subseteq
\widetilde{H}$ (since $\mathrm{Soc}(\widetilde{H})=\widetilde{H}\cap \mathrm{%
Soc}(M)\neq 0$), a contradiction since $N=N_{i}\nsubseteqq H_{\alpha _{i}}$
for some $i=1,\cdots ,n$. It follows that $\widetilde{H}=0,$ whence $%
\dbigcap\limits_{i=1}^{k}V(H_{\alpha
_{i}})=V(\dbigcap\limits_{i=1}^{k}H_{\alpha _{i}})=V(0)=\emptyset .$
\end{proof}

\subsection*{Connectedness Properties}

Recall (e.g. \cite{Bou1966}, \cite{Bou1998}) that a non-empty topological
space $\mathbf{X}$ is said to be

\emph{ultraconnected}, iff the intersection of any two non-empty closed
subsets is non-empty;

\emph{irreducible} (or \emph{hyperconnected}), iff $\mathbf{X}$ is not the
union of two proper \textit{closed} subsets, or equivalently iff the
intersection of any two non-empty open subsets is non-empty;

\emph{connected}, iff $\mathbf{X}$ is not the \textit{disjoint} union of two
proper \textit{closed} subsets; equivalently, iff the only subsets of $%
\mathbf{X}$ that are \textit{clopen} (\emph{i.e.} closed and open) are $%
\emptyset $ and $\mathbf{X}$.

\begin{prop}
\label{it-irr}Let $_{R}M$ be a top$^{\mathrm{f}}$-module and assume that
every first submodule of $M$ is simple.

\begin{enumerate}
\item[(a)] $\mathrm{Spec}^{\mathrm{f}}(M)$ is discrete.

\item[(b)] $M$ has a unique simple $R$-submodule if and only if $\mathrm{Spec%
}^{\mathrm{f}}(M)$ is connected.

\item[(c)] $_{R}M$ is colocal if and only if $\mathrm{Spec}^{\mathrm{f}}(M)$
is connected and $\mathrm{Soc}(M)\leq _{R}M$ is essential.
\end{enumerate}
\end{prop}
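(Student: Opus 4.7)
The plan is to exploit that under the hypothesis ``every first submodule of $M$ is simple'', combined with Remark~\ref{S(M)} (which states that every simple submodule is first), one has the identification $\mathrm{Spec}^{\mathrm{f}}(M)=\mathcal{S}(M)$. Under this identification, Corollary~\ref{simple-0} together with Remark~\ref{min} guarantees the min-property: for every simple $K\leq_R M$, the submodule
\begin{equation*}
K_e:=\sum_{K'\in\mathcal{S}(M)\setminus\{K\}}K'
\end{equation*}
does not contain $K$. This is the key fact driving all three parts.

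For part (a), I would first observe that $V(K)=\{K\}$ for each $K\in\mathrm{Spec}^{\mathrm{f}}(M)$, since the only first submodule contained in a simple module is the module itself. To see that $\{K\}$ is also open, I would exhibit its complement as a closed set by taking $H=K_e$: every simple $K'\neq K$ lies in $K_e$ by construction, while $K\not\subseteq K_e$ by the min-property, so $V(K_e)=\mathrm{Spec}^{\mathrm{f}}(M)\setminus\{K\}$. Hence every singleton is clopen and the space is discrete. Part (b) is then a formal consequence: a non-empty discrete space is connected iff it has exactly one point, which under the identification $\mathrm{Spec}^{\mathrm{f}}(M)=\mathcal{S}(M)$ is exactly the assertion that $M$ has a unique simple submodule.

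For part (c), I would recall that $M$ is colocal iff $\mathrm{Soc}(M)$ is simple and essential in $M$: a colocal $M$ has a smallest non-zero submodule, which must be simple, equal to $\mathrm{Soc}(M)$, and essential; conversely, if $\mathrm{Soc}(M)=:K$ is simple and essential, then $N\cap K\neq 0$ together with simplicity of $K$ forces $K\subseteq N$ for every non-zero $N\leq_R M$. Having reduced colocality to this pair of conditions, I would combine it with (b): the first condition ``$\mathrm{Soc}(M)$ simple'' is equivalent to ``$M$ has a unique simple submodule'', which by (b) is equivalent to connectedness of $\mathrm{Spec}^{\mathrm{f}}(M)$.

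The only real obstacle is part (a), specifically locating a submodule whose closed set in $\mathrm{Spec}^{\mathrm{f}}(M)$ realises the complement of $\{K\}$. Once $K_e$ is identified as the correct candidate via Corollary~\ref{simple-0} and the min-property, the remaining arguments are routine formal consequences of the identification $\mathrm{Spec}^{\mathrm{f}}(M)=\mathcal{S}(M)$ and the standard characterisation of colocal modules.
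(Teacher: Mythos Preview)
Your proposal is correct and follows essentially the same line as the paper: both use Corollary~\ref{simple-0} together with Remark~\ref{min} to obtain the min-property, then realise $\{K\}$ as the open set $\mathcal{X}(K_e)$ (equivalently, $V(K_e)=\mathrm{Spec}^{\mathrm{f}}(M)\setminus\{K\}$), and deduce (b) and (c) from (a) by the standard observation that a discrete connected space is a singleton. Your treatment of (c) simply spells out what the paper summarises as ``follows directly from the definitions and (b)''.
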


\begin{proof}
(a) Notice that $_{R}M$ has the min-property by Corollary \ref{simple-0} and
Remark \ref{min}. It follows that for every $K\in \mathrm{Spec}^{\mathrm{f}%
}(M)={\mathcal{S}}(M)$ we have $\{K\}={\mathcal{X}}(\{K\}_{e})$ an open set.

(b) ($\Rightarrow $) clear.

($\Leftarrow $) By (a), $\mathrm{Spec}^{\mathrm{f}}(M)$ is discrete and so ${%
\mathcal{S}}(M)=\mathrm{Spec}^{\mathrm{f}}(M)$ has only one point since a
discrete connected space cannot contain more than one-point.

(c) follows directly from the definitions and (b).
\end{proof}

\begin{rem}
\label{s-t1}Let $_{R}M$ be a top$^{\mathrm{f}}$-module with essential socle.
Recall that $\mathcal{S}(M)\subseteq \mathrm{Spec}^{\mathrm{f}}(M)$ without
any conditions on $_{R}M.$ If $\{H\}$ is closed in $\mathrm{Spec}^{\mathrm{f}%
}(M)$ for some $H\leq _{R}M,$ then $\{H\}=V(K)$ for some $0\neq K\leq _{R}M$
and we conclude that $_{R}H$ is simple: if not, then there exists some
simple $R$-submodule $\widetilde{H}\lvertneqq _{R}H$ and we would have $\{H,%
\widetilde{H}\}\subseteq V(K)=\{H\}$, a contradiction. So, $H\leq _{R}M$ is
simple if and only if $H$ is a first submodule of $M$ and $V(H)=\{H\}$ if
and only if $\{H\}$ is closed in $\mathrm{Spec}^{\mathrm{f}}(M)$. Assume that
\end{rem}

\qquad Combining Proposition \ref{it-irr} and Remark \ref{s-t1} we obtain

\begin{theorem}
\label{T2}For a top$^{\mathrm{f}}$-module $M$ with essential socle, the
following are equivalent:

\begin{enumerate}
\item $\mathrm{Spec}^{\mathrm{f}}(M)=\mathcal{S}(M);$

\item $\mathrm{Spec}^{\mathrm{f}}(M)$ is discrete;

\item $\mathrm{Spec}^{\mathrm{f}}(M)$ is $T_{2}$ \emph{(}Hausdorff space%
\emph{)};

\item $\mathrm{Spec}^{\mathrm{f}}(M)$ is $T_{1}$ \emph{(}Fr\'{e}cht space%
\emph{)}.
\end{enumerate}
\end{theorem}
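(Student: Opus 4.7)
The plan is to establish the cycle of implications $(1)\Rightarrow(2)\Rightarrow(3)\Rightarrow(4)\Rightarrow(1)$, appealing to the material already developed in Proposition \ref{it-irr} and Remark \ref{s-t1}, together with the general topological facts that every discrete space is Hausdorff and every Hausdorff space is $T_{1}$.

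For $(1)\Rightarrow(2)$, I would observe that if $\mathrm{Spec}^{\mathrm{f}}(M)=\mathcal{S}(M)$ then, in particular, every first submodule of $M$ is simple. This is precisely the hypothesis of Proposition \ref{it-irr}, whose part (a) yields that $\mathrm{Spec}^{\mathrm{f}}(M)$ is discrete. The implications $(2)\Rightarrow(3)$ and $(3)\Rightarrow(4)$ are then purely topological and immediate, since every discrete topological space is Hausdorff and every Hausdorff space is $T_{1}$.

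The only non-formal step is $(4)\Rightarrow(1)$, which I view as the main (though still brief) obstacle. The inclusion $\mathcal{S}(M)\subseteq \mathrm{Spec}^{\mathrm{f}}(M)$ is always available (Remark \ref{S(M)}), so it suffices to prove the reverse inclusion. Given $H\in \mathrm{Spec}^{\mathrm{f}}(M)$, the $T_{1}$ hypothesis forces the singleton $\{H\}$ to be closed. Here I would invoke Remark \ref{s-t1}, which (using that $\mathrm{Soc}(M)\leq_{R}M$ is essential) guarantees that any $H$ for which $\{H\}$ is closed in $\mathrm{Spec}^{\mathrm{f}}(M)$ must be a simple submodule: otherwise a proper simple submodule $\widetilde{H}\lneq_{R}H$ would also lie in any closed set $V(K)$ containing $H$, contradicting $V(K)=\{H\}$. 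Hence $H\in \mathcal{S}(M)$, completing the cycle.

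Thus the proof reduces to quoting Proposition \ref{it-irr}(a), Remark \ref{s-t1}, and the standard separation-axiom hierarchy; no further computation is required.
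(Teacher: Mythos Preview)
Your proposal is correct and follows exactly the approach the paper indicates: the paper simply states that the theorem is obtained by ``combining Proposition \ref{it-irr} and Remark \ref{s-t1}'', and your cycle $(1)\Rightarrow(2)\Rightarrow(3)\Rightarrow(4)\Rightarrow(1)$ makes this explicit, using Proposition \ref{it-irr}(a) for $(1)\Rightarrow(2)$, the standard separation-axiom hierarchy for $(2)\Rightarrow(3)\Rightarrow(4)$, and Remark \ref{s-t1} for $(4)\Rightarrow(1)$.
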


\begin{prop}
\label{com-prop}Let $_{R}M$ be comultiplication.

\begin{enumerate}
\item[(a)] $_{R}M$ is a strongly top$^{\mathrm{f}}$-module; in particular, $%
_{R}M$ is a top$^{\mathrm{f}}$-module.

\item[(b)] $\mathcal{S}(M)=\mathrm{Spec}^{\mathrm{f}}(M),$ \emph{i.e. }every
first submodule of $M$ is simple.

\item[(c)] $\mathrm{Spec}^{\mathrm{f}}(M)$ is discrete.
\end{enumerate}
\end{prop}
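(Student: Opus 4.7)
The plan is to prove (b) first (since it feeds into the argument for (a)), then establish (a) via strong hollowness, and finally deduce (c) from (a) and (b) by invoking Proposition~\ref{it-irr}(a). The crucial preliminary observation is that comultiplication is inherited by submodules: if $L \leq_R N \leq_R M$, then $L = (0:_M (0:_R L))$ by the comultiplication of $M$, and since $L \subseteq N$ this coincides with $(0:_M (0:_R L)) \cap N = (0:_N (0:_R L))$, so $N$ is comultiplication.

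For (b), any first submodule $F$ of $M$ is, by this inheritance, a comultiplication module, and Proposition~\ref{c-ann}(a) immediately forces $_RF$ to be simple.

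For (a), fix $F \in \mathrm{Spec}^{\mathrm{f}}(M)$ and submodules $L_1, L_2 \leq_R M$ with $F \subseteq L_1 + L_2$. Applying $(0{:}_R{-})$ reverses inclusion, and combined with the elementary identity $(0:_R L_1+L_2) = (0:_R L_1) \cap (0:_R L_2)$ this yields
\begin{equation*}
(0:_R L_1)(0:_R L_2) \;\subseteq\; (0:_R L_1)\cap(0:_R L_2) \;=\; (0:_R L_1+L_2) \;\subseteq\; (0:_R F).
\end{equation*}
By Remark~\ref{cop-prime} the ideal $(0:_R F)$ is prime, so $(0:_R L_i) \subseteq (0:_R F)$ for some $i\in\{1,2\}$. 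Applying $(0{:}_M{-})$ and invoking the comultiplication of $M$ then gives
\begin{equation*}
F \;=\; (0:_M (0:_R F)) \;\subseteq\; (0:_M (0:_R L_i)) \;=\; L_i,
\end{equation*}
which is precisely the strongly hollow condition in $\mathcal{L}(M)^\circ$. Hence $_RM$ is a strongly top$^{\mathrm{f}}$-module and, by Remark~\ref{strong->top}(a), a top$^{\mathrm{f}}$-module.

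Finally, (c) is immediate: (a) shows $M$ is a top$^{\mathrm{f}}$-module and (b) shows every first submodule is simple, so Proposition~\ref{it-irr}(a) applies directly and gives that $\mathrm{Spec}^{\mathrm{f}}(M)$ is discrete. The main obstacle is step (a); the key move is to translate the module-theoretic containment $F \subseteq L_1+L_2$ into an ideal-theoretic containment via annihilators so that the primeness of $(0:_R F)$ can be used, and then to use the comultiplication hypothesis as the dictionary that returns us from ideals back to submodules.
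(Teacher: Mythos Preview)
Your proof is correct and follows essentially the same approach as the paper: parts (b) and (c) invoke precisely the same results (Proposition~\ref{c-ann}(a) plus inheritance of comultiplication, and Proposition~\ref{it-irr}(a)), and your direct annihilator argument for (a) is just an explicit unpacking of Equation~(\ref{L1+L2}) (via the primeness of $(0:_R F)$ from Remark~\ref{cop-prime}) combined with $\mathrm{Sub}_c(M)=\mathrm{Sub}(M)$, which is exactly what the paper cites. One expository quibble: you announce that (b) ``feeds into the argument for (a)'', but your proof of (a) never actually uses (b).
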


\begin{proof}
Let $_{R}M$ be comultiplication.

(a) This follows directly from the fact that $\mathrm{Sub}_{c}(M)=\mathrm{Sub%
}(M),$ Equation (\ref{L1+L2}) (see Remark \ref{strong->top}).

(b) This follows from Lemma \ref{c-ann} (a) and the fact that all submodules
of a comultiplication module are also comultiplication.

(c) This follows from Proposition \ref{it-irr} (a).
\end{proof}

\begin{ex}
\label{com-s}If $R$ is a \emph{left dual ring} \cite{NY2003}, then $_{R}R$
is a strongly top$^{\mathrm{f}}$-module and $\mathrm{Spec}^{\mathrm{f}%
}(_{R}R)=\mathrm{Min}(_{R}R)$ the set of minimal left ideals of $R.$
\end{ex}

\begin{ex}
\label{Prufer}$\mathbb{Z}_{p^{\infty }}$ is a comultiplication $\mathbb{Z}$%
-module, whence a strongly top$^{\mathrm{f}}$-module. Any $\mathbb{Z}$%
-submodule of $\mathbb{Z}_{p^{\infty }}$ is of the form $\mathbb{Z}(\frac{1}{%
p^{n}}+\mathbb{Z})$ for some $n\in \mathbb{N}$ and so $\mathbb{Z}_{p^{\infty
}}\notin \mathrm{Spec}^{\mathrm{f}}(\mathbb{Z}_{p^{\infty }})$ since $%
\mathrm{ann}_{\mathbb{Z}}(\mathbb{Z}_{p^{\infty }})=0\neq \mathrm{ann}_{%
\mathbb{Z}}(\mathbb{Z}(\frac{1}{p^{n}}+\mathbb{Z}))$ for every $n\in \mathbb{%
N}.$ Moreover, it is evident that $\mathrm{ann}_{\mathbb{Z}}(\mathbb{Z}(%
\frac{1}{p^{n_{1}}}+\mathbb{Z}))\varsupsetneqq \mathrm{ann}_{\mathbb{Z}}(%
\mathbb{Z}(\frac{1}{p^{n_{2}}}+\mathbb{Z})),$ whence $\mathbb{Z}(\frac{1}{%
p^{n_{2}}}+\mathbb{Z})\notin \mathrm{Spec}^{\mathrm{f}}(\mathbb{Z}%
_{p^{\infty }})$ if $n_{1}\lvertneqq n_{2}.$ Consequently, $\mathrm{Spec}^{%
\mathrm{f}}(\mathbb{Z}_{p^{\infty }})=\{\mathbb{Z}(\frac{1}{p}+\mathbb{Z})\}=%
\mathcal{S}(\mathbb{Z}_{p^{\infty }}).$ Clearly, $\tau ^{\mathrm{f}}(\mathbb{%
Z}_{p^{\infty }})=\{\emptyset ,\{\mathbb{Z}(\frac{1}{p}+\mathbb{Z})\}\}$ is
the trivial topology and is connected.
\end{ex}

\begin{prop}
\label{uniform}A top$^{\mathrm{f}}$-module $M$ with essential socle is
uniform if and only if $\mathrm{Spec}^{\mathrm{f}}(M)$ is ultraconnected.
\end{prop}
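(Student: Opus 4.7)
The plan is to exploit two structural facts already set up earlier in the paper: first, that intersections commute with $V$, i.e.\ $V(H_1)\cap V(H_2)=V(H_1\cap H_2)$ for all submodules $H_1,H_2\leq _RM$; second, that under the essential-socle hypothesis one has the equivalence $V(H)=\mathrm{Spec}^{\mathrm{f}}(H)=\emptyset \iff H=0$ (Remark \ref{simple-char}(e), using that $P\in \mathrm{Spec}^{\mathrm{f}}(M)$ with $P\subseteq H$ is the same as $P\in \mathrm{Spec}^{\mathrm{f}}(H)$). Closed subsets of $\mathrm{Spec}^{\mathrm{f}}(M)$ are by definition of the form $V(H)$, so these two facts will translate the module-theoretic condition ``non-zero intersection of non-zero submodules'' into its topological counterpart ``non-empty intersection of non-empty closed sets'' essentially verbatim.

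For the forward implication, assume $_RM$ is uniform. Let $V(H_1),V(H_2)$ be two non-empty closed subsets of $\mathrm{Spec}^{\mathrm{f}}(M)$. Non-emptiness supplies a first (in particular non-zero) submodule inside each $H_i$, so $H_1,H_2\neq 0$; uniformity gives $H_1\cap H_2\neq 0$; and the essential-socle characterization then forces $V(H_1\cap H_2)=V(H_1)\cap V(H_2)$ to be non-empty. Hence $\mathrm{Spec}^{\mathrm{f}}(M)$ is ultraconnected.

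For the converse, assume $\mathrm{Spec}^{\mathrm{f}}(M)$ is ultraconnected and let $H_1,H_2$ be two non-zero submodules of $M$. Essentiality of $\mathrm{Soc}(M)$ yields a simple submodule of each $H_i$, and simple submodules are first (Remark \ref{S(M)}), so both $V(H_1)$ and $V(H_2)$ are non-empty. Ultraconnectedness then gives $V(H_1)\cap V(H_2)=V(H_1\cap H_2)\neq \emptyset$, which produces a non-zero (first) submodule of $H_1\cap H_2$, so $H_1\cap H_2\neq 0$. Thus $_RM$ is uniform.

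No serious obstacle is expected: the proof is essentially a dictionary argument. The only point that needs care is that the essential-socle assumption is used twice and in both directions, namely to ensure that ``non-empty $V(-)$'' and ``non-zero submodule'' are equivalent; without it, the forward direction could fail because a non-zero $H_1\cap H_2$ might contain no first submodule, and the backward direction could fail because the given non-zero $H_i$ need not meet $\mathrm{Spec}^{\mathrm{f}}(M)$. Neither direction makes any further use of the top$^{\mathrm{f}}$ hypothesis beyond what is already built into the definition of the topology.
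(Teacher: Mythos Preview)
Your proof is correct and follows essentially the same argument as the paper's own proof: both directions translate between the conditions $H\neq 0$ and $V(H)\neq\emptyset$ via the essential-socle hypothesis (Remark~\ref{simple-char}(e)), and use the identity $V(H_1)\cap V(H_2)=V(H_1\cap H_2)$ to pass between uniformity and ultraconnectedness. Your closing observation that the top$^{\mathrm{f}}$ hypothesis is only needed to make $\mathrm{Spec}^{\mathrm{f}}(M)$ a topological space, and that essential socle is genuinely used in both directions, is accurate.
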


\begin{proof}
$(\Rightarrow )$ Let $_{R}M$ be uniform. For any non-empty closed subsets $%
V(K_{1}),V(K_{2})\subseteq \mathrm{Spec}^{\mathrm{fc}}(M),$ we have indeed $%
H_{1}\neq 0\neq H_{2}$ whence $V(H_{1})\cap V(H_{2})=V(H_{1}\cap H_{2})\neq
\emptyset ,$ since $H_{1}\cap H_{2}\neq 0$ by uniformity of $_{R}M$ and so
it contains by assumption some simple $R$-submodule which is indeed first in
$M.$

$(\Leftarrow )$ Assume that the $\mathrm{Spec}^{\mathrm{f}}(M)$ is
ultraconnected. Let $H_{1}$ and $H_{2}$ be non-zero $R$-submodules of $%
_{R}M. $ It follows that $V(H_{1})\neq \emptyset \neq V(H_{2}).$ By
assumption, $V(H_{1}\cap H_{2})=V(H_{1})\cap V(H_{2})\neq \emptyset ,$ hence
$H_{1}\cap H_{2}\neq 0.$
\end{proof}

\end{document}